\newcommand\cO{\mathscr{O}}
\def\bC{\mathbf{C}}
\def\bF{\mathbf{F}}
\def\bP{\mathbf{P}}
\def\bQ{\mathbf{Q}}
\def\bR{\mathbf{R}}
\def\bV{\mathbf{V}}
\newcommand\frt{\mathfrak{t}}
\newcommand{\beq}{\begin{equation}}
\newcommand{\eeq}{\end{equation}}
\theoremstyle{plain}
\newtheorem{theorem}[subsection]{Theorem}
\newtheorem{proposition}[subsection]{Proposition}
\newtheorem{lemma}[subsection]{Lemma}
\newtheorem{corollary}[subsection]{Corollary}
\theoremstyle{remark}
\theoremstyle{definition}
\newtheorem{definition}[subsection]{Definition}
\DeclareMathOperator{\ad}{ad}
\DeclareMathOperator{\Spec}{Spec}
\DeclareMathOperator{\Frac}{Frac}
\DeclareMathOperator{\gr}{gr}
\DeclareMathOperator{\PGL}{PGL}
\DeclareMathOperator{\Hom}{Hom}
\DeclareMathOperator{\Ext}{Ext}
\DeclareMathOperator{\sct}{sc}
\DeclareMathOperator{\rt}{rt}
\DeclareMathOperator{\var}{var}
\DeclareMathOperator{\can}{can}
\DeclareMathOperator{\Mod}{Mod}
\DeclareMathOperator{\DMod}{DMod}
\DeclareMathOperator{\Bim}{Bim}
\DeclareMathOperator{\DBim}{DBim}
\DeclareMathOperator{\SBim}{SBim}
\DeclareMathOperator{\KSBim}{KSBim}
\DeclareMathOperator{\DK}{DK}
\DeclareMathOperator{\DShv}{DShv}
\DeclareMathOperator{\DSHV}{DShv}
\DeclareMathOperator{\Tilt}{Tilt}
\DeclareMathOperator{\KTilt}{KTilt}
\DeclareMathOperator{\aff}{aff}
\DeclareMathOperator{\SL}{SL}
\DeclareMathOperator{\Av}{Av}
\DeclareMathOperator{\Forg}{Res}
\newsavebox{\pullback}
\sbox\pullback{
\begin{tikzpicture}
\draw (0,0) -- (1ex,0ex);
\draw (1ex,0ex) -- (1ex,1ex);
\end{tikzpicture}}
\swapnumbers \numberwithin{equation}{section}
\title{Universal monodromic tilting sheaves}
\author{Jeremy Taylor}
\date{}
\begin{document}

\begin{abstract} 
We construct the universal monodromic big tilting sheaf on base affine space and calculate its endomorphisms. By formal completion, we recover Soergel's pro-unipotent Endomorphismensatz with arbitrary field coefficients. We give a Soergel bimodules description of the universal monodromic Hecke category and deduce a conjecture of Eberhardt that uncompletes Koszul duality.
\end{abstract}

\maketitle

\section{Introduction}

Let $G$ be a complex reductive group with connected center. Let $N \subset B$ be the unipotent radical of a Borel, and $T$ be a maximal torus. Let $Y \coloneqq G/N$ be base affine space, a $T$-torsor over the flag variety. Fix an arbitrary coefficient field $k$. Let $R \coloneqq k[\Lambda]$ be the group ring of the coweight lattice $\Lambda$.

The principal block of \cite{BGG75} category $\cO$ is equivalent to $B$-equivariant sheaves on $Y$. It admits the following deformation, constructed using sheaves with infinite dimensional stalks. Let $\DShv_{(B)}'(Y)$ be the weakly $B$-constructible (i.e. locally constant along the $B$-orbits) derived category of sheaves on $Y$. The universal monodromic Hecke category $\DShv_{(B)}(Y)$ is the full subcategory of compact objects. Left and right $T$-monodromy make it an $R$-bilinear category.


\subsection{Uncompleting Soergel's Endomorphismensatz}
We construct the universal monodromic big tilting sheaf $\Xi \in \DShv_{(B)}(Y)$, admitting standard and costandard filtrations and corepresenting certain vanishing cycles. Pushing forward to the flag variety recovers the non-monodromic \cite{BBM} tilting sheaf.

The big tilting sheaf is indecomposable, but splits completely after localizing to the regular locus in the dual torus $\check{T}$. We calculate its endomorphisms by localizing to the the regular and subregular locus then invoking Hartogs' lemma.

This is the same strategy as in \cite{So}, where Soergel interpolates between different blocks of category $\cO$ using modules on which the Cartan acts non-semisimply, but we work with sheaves rather than modules for the enveloping algebra. Under certain assumptions, \cite{GKM} explains how to reconstruct $\check{T}$-equivariant cohomology from just the 0 and 1 dimensional orbits, and our methods are Koszul dual to their equivariant localization.

\begin{theorem}\label{main}
Endomorphisms of the big tilting sheaf is $\Hom(\Xi, \Xi) \simeq R \otimes_{R^{W}} R$.
\end{theorem}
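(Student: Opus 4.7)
The plan is to construct a natural map $\Phi \colon R \otimes_{R^W} R \to \Hom(\Xi,\Xi)$ from commuting left and right $T$-monodromy, and then show it is an isomorphism using Hartogs' lemma on $\check T = \Spec R$. This is the same outline as \cite{So}, transplanted from $\frg$-modules to sheaves on $Y$. Left and right monodromy provide two commuting $R$-actions on $\Hom(\Xi,\Xi)$; to get $\Phi$ one first verifies that the two actions agree on $R^W$, which since $\Hom(\Xi,\Xi)$ is torsion free over $R$ can be checked after localizing to the regular locus of $\check T$. Both sides are free of rank $|W|$ over $R$: for $R \otimes_{R^W} R$ this uses that $R$ is free over $R^W$ of rank $|W|$ (valid for any field $k$ when $\Lambda$ is the coweight lattice of an adjoint group, by Steinberg/Pittie); for $\Hom(\Xi,\Xi)$ it follows from the standard/costandard filtrations of $\Xi$ together with $\Ext$-orthogonality of standards against costandards.

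Next I would localize to the regular locus of $\check T$. After inverting $\alpha - 1$ for every root $\alpha$, standards and costandards coincide and $\Xi$ decomposes as $\bigoplus_{w \in W} \Delta_w$, so $\Hom(\Xi,\Xi)$ becomes the diagonal algebra $\prod_W R_{\mathrm{reg}}$. On the other side, $R$ is \'etale Galois over $R^W$ on the regular locus, so $R \otimes_{R^W} R$ likewise becomes $\prod_W R_{\mathrm{reg}}$. Matching idempotents indexed by $W$ shows $\Phi$ is a generic isomorphism.

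The main obstacle is the codimension one (subregular) analysis: for each simple root $\alpha$ one must show $\Phi$ is an isomorphism after localizing along the wall $\alpha = 1$. This should reduce to a rank-one computation inside the minimal Levi $L_\alpha$, where $\Xi_{L_\alpha}$ can be described explicitly on $L_\alpha / N_{L_\alpha}$. Once $\Phi$ is an isomorphism on the regular locus and at every codimension one prime of $R$, Hartogs' lemma applied to the reflexive modules on either side over the smooth variety $\check T$ forces $\Phi$ to be a global isomorphism.
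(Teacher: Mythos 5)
Your outline---construct a bimonodromy map $\Phi\colon R\otimes_{R^W}R\to\Hom(\Xi,\Xi)$ between free right $R$-modules of the same rank, split it generically, check at codimension one, and conclude by Hartogs---is indeed the paper's strategy. But the codimension-one step, which is the technical heart, is substantially understated and has a genuine gap. The big tilting $\Xi$ is supported on all of $Y$, not on $\overline Y_s = P_s/N$, so there is no direct ``rank-one computation inside the minimal Levi'': one cannot simply restrict to $L_\alpha$. What actually happens is that after right-localizing away from every wall except $\check T_\alpha$, the object $\Xi^{(\alpha)}$ decomposes as $\bigoplus_{W/\langle s\rangle}\Delta_w^{(\alpha)}*\Xi_s^{(\alpha)}$, a sum of $|W/\langle s\rangle|$ twisted copies of the rank-one tilting $\Xi_s$, and the bimodule side factors correspondingly as $\prod_{W/\langle s\rangle}R_w\otimes_{R^s}R^{(\alpha)}$. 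Establishing these splittings requires cleanness $\Delta_w^{(\alpha)}=\nabla_w^{(\alpha)}$ for $w<ws$ after localization, a block decomposition ruling out extensions between different $s$-cosets, and a uniformization lemma on $\check T$ showing that away from $\check T_\alpha$ the union of graphs $\bigcup_W\Gamma_w$ separates into disjoint pairs $\Gamma_w\cup\Gamma_{ws}$. The last point is exactly where adjointness of $G$ enters; without it the bimodule side does not split into the stated product. None of this appears in your proposal, and the phrase ``can be described explicitly on $L_\alpha/N_{L_\alpha}$'' gestures at a computation that does not take place on the Levi flag variety.

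Separately, you localize only at walls $\check T_\alpha$ with $\alpha$ simple. The walls $\check T_\beta$ for non-simple coroots $\beta$ are also codimension-one strata where the determinant of $\Phi$ could a priori vanish, and your Hartogs argument must rule them out. The paper reduces $\beta=w\alpha$ to the simple case via the nontrivial identity $\Xi*\Delta_w=\Xi$ together with the fact that right convolution by $\Delta_w$ twists the right $R$-module structure by $w$; you would need this (or an explicit $W$-equivariance argument for the determinant of $\Phi$) to close the argument.
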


Our sheaf functors are all implicitly derived. Thus $\Hom(\Xi, \Xi)$ is a priori a complex of vector spaces. But it is concentrated in degree 0 by the standard and costandard filtrations.

Our arguments are logically independent of \cite{So, BY, BR}. 
Taking the fiber at $1 \in \check{T}$ recovers Soergel's theorem that endomorphisms of the non-monodromic big tilting sheaf equals $R \otimes_{R^W} k_1$. 
Formally completing Theorem \ref{main} recovers theorems of \cite{BR, Gou}.

\subsection{Uncompleting Soergel's Struktursatz and BGS Koszul duality}
We describe the universal monodromic Hecke category in terms of Soergel bimodules. This generalizes theorems of \cite{LY, Gou} by ecompassing all monodromies simultaneously.

Let $\check{X} \coloneqq \check{G}/\check{B}$ be the flag variety of the Langlands dual group. Eberhardt introduces the category $\DK_{\check{B}}(\check{X})$ of $\check{B}$-equivariant K-motives, an uncompletion of the $\check{B}$-equivariant derived category of sheaves. The main theorem of \cite{Eb} describes $\DK_{\check{B}}(\check{X})$ as the bounded homotopy category of multiplicative (also known as K-theory) Soergel bimodules.
We deduce the following conjecture of Eberhardt that uncompletes \cite{BGS} Koszul duality.
\begin{theorem}\label{Koszul}
There is an equivalence $\DShv_{(B)}(Y) \simeq \DK_{\check{B}}(\check{X})$.
\end{theorem}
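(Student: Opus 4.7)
The plan is to identify both categories with the bounded homotopy category $K^b(\SBim)$ of Soergel $R$-bimodules. Eberhardt's main theorem in \cite{Eb} already provides $\DK_{\check{B}}(\check{X}) \simeq K^b(\SBim)$, so the remaining task is to produce an uncompleted struktursatz $\DShv_{(B)}(Y) \simeq K^b(\SBim)$ and compose. This is exactly the program flagged in the paragraph preceding Theorem \ref{Koszul}.

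\textbf{Building the equivalence from $\Xi$.} I would Krull--Schmidt decompose the big tilting into indecomposable universal monodromic tiltings $\Xi_w$ indexed by $w \in W$. Theorem \ref{main} identifies $\End(\Xi) = R \otimes_{R^W} R$, and the idempotents picking out the $\Xi_w$ cut this algebra into Hom spaces $\Hom(\Xi_v, \Xi_w)$, which I would match with the Hom spaces between Soergel's indecomposable bimodules $B_w$. The recognition proceeds by induction on $\ell(w)$ via convolution with the rank-one tilting through a minimal parabolic, mirroring Soergel's Bott--Samelson recursion; the base case and the key rank-one calculation are essentially Theorem \ref{main} after localizing to $s$-walls. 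This upgrades to a fully faithful additive functor $\Tilt(\DShv_{(B)}(Y)) \to \SBim$ that matches indecomposables. The standard/costandard filtrations on $\Xi$ yield finite tilting resolutions of the standards $\Delta_w$ and costandards $\nabla_w$, so tiltings generate $\DShv_{(B)}(Y)$; the same filtrations give $\Ext^{>0}$-vanishing between tiltings, so passing to bounded homotopy categories produces $K^b(\Tilt) \simeq \DShv_{(B)}(Y)$. Combined with the additive equivalence $\Tilt \simeq \SBim$ and Eberhardt's theorem, this proves Theorem \ref{Koszul}.

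\textbf{Main obstacle.} I expect the hardest step to be the inductive identification of $\Xi_w$ with the indecomposable Soergel bimodule $B_w$ inside $R \otimes_{R^W} R$, working over an arbitrary coefficient field $k$. Theorem \ref{main} pins down the total endomorphism algebra, but isolating each idempotent and recognizing its image as Soergel's $B_w$ requires monoidal compatibility of the universal monodromic construction with convolution by minimal parabolic projections, together with careful behavior under base change to the regular and subregular loci on $\check{T}$ that feature in the proof of Theorem \ref{main}. By contrast, the generation and $\Ext$-vanishing statements are essentially formal consequences of the standard/costandard filtrations already built into $\Xi$.
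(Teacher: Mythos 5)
Your overall strategy is right: identify both sides with the bounded homotopy category of Soergel $R$-bimodules, citing Eberhardt for the K-motives side, and supplying an uncompleted struktursatz for the sheaf side. This is exactly what the paper does. But the execution in "Building the equivalence from $\Xi$" has a genuine gap.

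You propose to "Krull--Schmidt decompose the big tilting into indecomposable universal monodromic tiltings $\Xi_w$" and extract idempotents of $\End(\Xi) = R \otimes_{R^W} R$ that cut out $\Hom(\Xi_v,\Xi_w)$. This fails on two counts. First, $\Xi$ is \emph{indecomposable}: its endomorphism ring $R \otimes_{R^W} R = \cO(\check T \times_{\check T /\!\!/ W} \check T)$ is the ring of functions on the (connected) union of graphs, so it has no nontrivial idempotents; $\Xi$ only splits after localizing away from walls, as the paper emphasizes in the introduction. Second, $R = k[\Lambda]$ is a Laurent polynomial ring, not local, so the category of summands of $\Xi_{\underline{x}}$'s is not Krull--Schmidt, and the notion of "the" indecomposable tilting $\Xi_w$ or "the" indecomposable Soergel bimodule $B_w$ is not well-posed over $R$ (this is precisely the pathology that completed/graded settings avoid). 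The paper sidesteps both problems by never decomposing into indecomposables: it works directly with Bott--Samelson objects $\Xi_{\underline{x}}$ and $B_{\underline{x}}$, proves $\DShv_{(B)}(Y) = \KTilt_{(B)}(Y)$ (proposition \ref{TiltShv}), and establishes full faithfulness of $\bV$ on Bott--Samelson tiltings (theorem \ref{Struktursatz}) by a localization argument that reduces to four rank-one computations (lemmas \ref{BSSheafLoc} and \ref{Rank1Struk}). Replacing your Krull--Schmidt/idempotent step with this Bott--Samelson version is not a cosmetic change: it is the mechanism that makes the argument work over an arbitrary coefficient field and a non-local base ring. The parts of your proposal about generation by tiltings and $\Ext^{>0}$-vanishing via standard/costandard filtrations are fine and match proposition \ref{TiltShv}.
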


K-motives pushed forward along Bott--Samelson resolutions correspond to tilting sheaves.

\subsection{Universal monodromic sheaves}

Let $j_w: Y_w \coloneqq B\dot{w} \hookrightarrow Y$ be the Borel orbit indexed by $w \in W$ in the Weyl group. There is a non-canonical isomorphism $Y_w \simeq \bC^{\ell(w)} \times T$. Let $R_{Y_w}$ be the universal local system on $Y_w$, the regular representation of the fundamental group. It is unique up to non-canonical isomorphism. Define the standard and costandard extensions \[\Delta_w \coloneqq j_{w!} R_{Y_w}[\ell(w) + \dim T] \quad \text{and} \quad \nabla_w \coloneqq j_{w*} R_{Y_w}[\ell(w) + \dim T].\]

Define the $R$-bimodule $R_w \coloneqq k[\Gamma_w]$ as functions on the graph $\Gamma_w \subset \check{T} \times \check{T}$ of $w$. The left and right monodromy actions on $\Delta_w$ and $\nabla_w$ differ by $w$. By adjunction \beq \label{StandardtoCostandard}\Hom(\Delta_w, \nabla_v) \simeq \begin{cases}R_w & \text{ if } w = v \\ 0 & \text{ otherwise.}\end{cases}\eeq

Constructing tilting sheaves is harder in the universal setting. We define $\Xi \coloneqq \Av_{(B)!} \chi_Y$ by averaging the Whittaker sheaf as in \cite{IY, LNY}. We will show that $\Xi$ admits a standard filtration by a vanishing cycles calculation, and a costandard filtration by using the longest Weyl group element.


\subsection{Endomorphismensatz proof outline}
Bimonodromy factors through \beq \label{BimonIntro} R \otimes_{R^W} R \rightarrow \Hom(\Xi, \Xi),\eeq a map of free right $R$-modules which we seek to prove is an isomorphism.

Let $\beta$ always denote a coroot and $t \in W$ the corresponding reflection. The dual torus $\check{T} \coloneqq \Spec R$ is stratified by intersections of walls $\check{T}_{\beta} \coloneqq \ker \beta$. By Hartogs' lemma it suffices to localize away from all higher codimension strata where multiple walls meet, then check that \eqref{BimonIntro} is an isomorphism on an open cover.

Let $\alpha$ always denote a simple coroot and $s \in W$ the corresponding simple reflection. The simple reflection tilting sheaf $\Xi_s$  supported on $\overline{Y}_s$, has endomorphisms $R \otimes_{R^s} R$. After right localizing away from all walls except $\check{T}_{\alpha}$, denoted by the $(\alpha)$ superscript, \[R \otimes_{R^W} R^{(\alpha)} \simeq \prod_{W/\langle s \rangle} R_w \otimes_{R^s} R^{(\alpha)} \qquad \text{and} \qquad \Xi^{(\alpha)} \simeq \bigoplus_{W/ \langle s \rangle} \Delta_w^{(\alpha)} * \Xi^{(\alpha)}_s\] 
both split with summands indexed by $w \in W$ minimal length in their $s$ coset, and therefore \eqref{BimonIntro} becomes an isomorphism. The same holds for non-simple coroots, and Theorem \ref{main} follows by Hartogs' lemma.

\subsection{Acknowledgments}
I am grateful to David Nadler for sharing his ideas on universal monodromic sheaves and to Zhiwei Yun for finding an error in an earlier draft of this paper. I also thank Harrison Chen, Gurbir Dhillon, Ben Elias, and the anonymous referee for their comments. This work was partially supported by NSF grant DMS-1646385.

\section{Universal monodromic sheaves}\label{SheavesAppendix}
We work in the analytic topology and allow infinite dimensional stalks, using the sheaf theory of \cite{KS} and \cite{BL}. (Their boundedness assumptions can be removed using \cite{Spal}.) Universal monodromic sheaves are elementary to define compared to pro-unipotent sheaves or K-motives.

\subsection{Monodromy}
Here we explain the $R$-bilinear structure on $\DShv_{(B)}(Y)$, using that weak constructibility with respect to the stratification by $T$-orbits is equivalent to equivariance for the universal cover. This is roughly a rewording of section 2.1 of \cite{BR}, whose arguments also extend to our universal monodromic setting.

Write $\DSHV'(X)$ for the derived category of all sheaves on a complex analytic space $X$. If $T \curvearrowright X$, let $\DSHV_{(T)}'(X)$ be the full subcategory of weakly $T$-constructible sheaves. 

Let $k_{\frt}$ be the constant sheaf on $\frt$, the universal cover of $T$. Let $\tilde{a}: \frt \times X \rightarrow X$ be  the (non-algebraic) action map, and define \beq \label{AvT} \Av_{(T)!}:\DSHV'(X) \rightarrow \DSHV_{(T)}'(X), \qquad K \mapsto \tilde{a}_!(k_{\frt} \boxtimes K)[2 \dim T].\eeq 
Let $\DSHV_{\frt}'(X)$ be the $\frt$-equivariant derived category.

\begin{lemma}\label{tEquivariant}
There is an equivalence $\DSHV_{\frt}'(X) \simeq \DSHV_{(T)}'(X)$.
\end{lemma}
\begin{proof}
According to \cite{BL}, the forgetful functor \[\Forg: \DSHV_{\frt}'(X) \rightarrow \DSHV'(X)\] admits a left adjoint $\Av_{\frt!}$, such that $\Forg \Av_{\frt!} \simeq \Av_{(T)!}$.
Since $\frt$ is contractible, $\Forg$ is fully faithful by Theorem 3.7.3 of \cite{BL}.

If $K \in \DSHV_{(T)}'(X)$ then $k_{\frt} \boxtimes K$ is locally constant along the fibers of $\tilde{a}$. Since those fibers are contractible, \beq \label{AvtFullyFaithful} K \simeq \tilde{a}_!(k_{\frt} \boxtimes K)[2 \dim T] \simeq \Forg \Av_{\frt!}K.\eeq Therefore $\DSHV_{(T)}'(X)$ is the essential image of $\Forg$.
\end{proof}

The coweight lattice $\Lambda \subset \frt$ acts trivially on $X$, so it acts by automorphisms of the identity functor, making $\DSHV_{(T)}'(X)$ an $R$-linear category. Moreover pushforward and pullback along $T$-equivariant maps are $R$-linear functors.

Denote the equivalence between $R$-modules and local systems by \[\DMod(R) \simeq \DShv_{(T)}'(T), \qquad M \mapsto M_T.\] Let $a: T  \times X \rightarrow X$ be the action map. Let $M \in \DMod(R)$ and $K, K' \in \DShv_{(T)}'(X)$. Define \[M \otimes_R K  \coloneqq a_!(M_T \boxtimes K)[2\dim T].\]
\begin{lemma} There are isomorphisms
\beq \label{TensorHom} \Hom(M \otimes_R K, K') \simeq \Hom_R(M, \Hom(K, K')),\eeq
and, if either $K$ or $M$ is compact, \beq \label{HomTensor} M \otimes_R \Hom(K, K') \simeq \Hom(K, M \otimes_R K').\eeq
\end{lemma}
\begin{proof}
The functor $- \otimes_R K: \DMod(R) \rightarrow \DShv_{(T)}'(X)$ is $R$-linear, because $a$ is equivariant for $T \curvearrowright T \times X$ by multiplication on the first factor and $T \curvearrowright T$ by multiplication. Moreover $R \otimes_R K \simeq K$ by \eqref{AvtFullyFaithful}. Resolving $M$ by free modules gives the desired isomorphisms.
\end{proof}

\subsection{Compactness}
The following is similar to Proposition G.3.5 of \cite{AGKRRV}.

\begin{lemma}\label{Compact}
The category $\DSHV_{(B)}'(Y)$ is compactly generated. Moreover $K \in \DSHV_{(B)}'(Y)$ is compact if and only if its stalks are perfect as complexes of $R$-modules.
\end{lemma}
\begin{proof}
Lemma 8.4.7(ii) of \cite{KS} implies that the $!$-restriction functor $\Hom_{\DSHV_{(B)}'(Y)}(\Delta_w, -)$ is isomorphic to restriction to a sufficiently small open ball containing $\dot{w}$ followed by compactly supported cohomology. Since these functors are continuous, $\Delta_w$ is compact in the weakly $B$-constructible category.

If $K \in \DSHV'_{(B)}(Y)$ and $Y_w$ is open in its support, then $\Hom(\Delta_w, K) \not\simeq 0$. Therefore $\DSHV'_{(B)}(Y)$ is compactly generated.
Suppose that the stalks of $K$ are all perfect over $R$. Then $K$ is compact because it admits a finite Cousin filtration whose $w$-graded piece is a finite complex of $\Delta_w$.

Suppose that $K$ is compact. Let $i: Y_1 \hookrightarrow Y$ be the closed stratum and $j: Y- Y_1 \hookrightarrow Y$ its open complement. Then $i^*$ preserves compactness because its right adjoint $i_* \simeq i_!$ is continuous. Therefore the stalks of $K|_{Y_1}$ are perfect $R$-modules. 
Also $j_! K|_{Y - Y_1}$ is compact, because it fits into a triangle $j_! K|_{Y - Y_1} \rightarrow K \rightarrow i_* K|_{Y_1}$, and by the previous paragraph $i_*$ preserves compactness. Since $j_!$ is continuous and fully faithful, $K|_{Y - Y_1}$ is compact. An inductive argument shows that all stalks of $K$ are perfect over $R$.
\end{proof}

\subsection{Convolution}
Here we verify that the usual convolution formulas 
still hold in the universal monodromic setting.
If $K \in \DSHV'(Y)$ and $K' \in \DSHV_{(B)}'(Y)$, define \[K * K' \coloneqq m_!(K \widetilde{\boxtimes} K')[\dim T]\] by pushforward along the multiplication map $m: G \times^N Y \rightarrow Y$. See section 4.3 of \cite{BY} for more details. Although $m$ is not proper, for weakly $T$-constructible sheaves $m_* \simeq m_![\dim T]$.

\begin{proposition}\label{AddConvolve}
If $\ell(vw) = \ell(v) + \ell(w)$ then there are noncanonical isomorphisms $\Delta_v * \Delta_w \simeq \Delta_{vw}$ and $\nabla_v * \nabla_w \simeq \nabla_{vw}$. 
\end{proposition}
\begin{proof}
It suffices to assume $v = s$ is a simple reflection. Let $N_{\check{\alpha}}^-$ and $N_{\check{\alpha}}$ be the corresponding negative and positive simple root spaces. Let $k_{N_{\check{\alpha}}!}$ be the !-extension of the constant sheaf on $N_{\check{\alpha}} \dot{s} = \dot{s} N_{\check{\alpha}}^- \subset Y$. Then $R_{Y_s}[\dim T] \simeq \Av_{(T)!} k_{N_{\check{\alpha}}!}$ on $Y_s \simeq T \times N_{\check{\alpha}}$. 

Because $\ell(sw) = 1 + \ell(w)$, the convolution map restricts to an isomorphism $\dot{s} N_{\check{\alpha}}^- \times Y_w \xrightarrow{\sim} Y_{sw}$. Thus $\Delta_s * \Delta_w \simeq \Av_{(T)!} k_{N_{\check{\alpha}}!} * \Delta_w[1] \simeq \Av_{(T)!} \Delta_{sw} \simeq \Delta_{sw}$ by \eqref{AvtFullyFaithful}.
The proof for costandards is similar.
\end{proof}

\begin{proposition}\label{InverseConvolve} If $w \in W$ then $\Delta_w * \nabla_{w^{-1}} \simeq \nabla_{w^{-1}} * \Delta_w \simeq \Delta_1$.
\end{proposition}
\begin{proof}
By Proposition \ref{AddConvolve} it suffices to consider $w = s$ a simple reflection. As above $\Delta_s * \nabla_s \simeq m_!(k_{N_{\check{\alpha}}} \boxtimes \nabla_s)$ is pushed forward along the $N_{\check{\alpha}}^-$-torsor $m: \dot{s} N_{\check{\alpha}}^- \times \overline{Y}_s \rightarrow \overline{Y}_s$.

If $n \in N_{\check{\alpha}}^-$, then $(\Delta_s * \nabla_s)|_{\dot{s}n} \simeq \Gamma_c(\nabla_s|_{N_{\check{\alpha}}^-}) \simeq \Gamma_1(\nabla_s|_{N_{\check{\alpha}}^-}) \simeq 0$, by base change 
and Lemma \ref{Contraction}. Therefore $\Delta_s * \nabla_s$ is supported on $Y_1$.

By base change $(\Delta_s * \nabla_s)|_{Y_1} \simeq R_{Y_1}[\dim T]$ is the !-pushforward of the universal local system along the projection $N_{\check{\alpha}} \times T \rightarrow T$. Therefore $\Delta_s * \nabla_s \simeq \Delta_1$. 
\end{proof}

We used the following contraction principle (see Proposition 3.7.5 of \cite{KS}).

\begin{lemma}\label{Contraction} Let $\bR^{> 0}$ act linearly on a vector space $V$ with positive weights. If $K$ is a weakly $\bR^{> 0}$-constructible sheaf on $V$ then \begin{enumerate}
\item\label{Contraction1} $\Gamma(K) \simeq \Gamma(K|_0)$,
\item\label{Contraction2} $\Gamma_0(K) \simeq \Gamma_c(K)$.
\end{enumerate}
\end{lemma}
\begin{proof} 
If $U \subset V$ is a convex open neighborhood of $0$ then Corollary 3.7.3 of \cite{KS} implies $\Gamma(K) \simeq \Gamma(K|_U)$. The collection of such open neighborhoods is cofinal, which implies \eqref{Contraction1}.

Let $V^*$ denote the one-point compactification of $V$. Applying \eqref{Contraction1} to the !-extension to $V^* - 0$ of $j^*K$ gives $\Gamma_c(j_*j^* K) \simeq 0$.
Therefore the triangle $\Gamma_x(K) \rightarrow \Gamma_c(K) \rightarrow \Gamma_c(j_*j^* K)$, implies the desired \eqref{Contraction2}. \end{proof}

\subsection{Perversity}
Perversity of non-monodromic standard and costandard sheaves is usually proved using Artin vanishing. But, following \cite{KS}, our perverse sheaves are allowed infinite dimensional stalks. Therefore Corollary 4.1.3 of \cite{BBD} does not directly apply in our setting. Below is an alternative argument using convolution.

\begin{proposition}\label{Perverse}
If $M \in \Mod(R)$ is finitely generated, then $M \otimes_R \Delta_w$ and $M \otimes_R \nabla_w$ are compact and perverse.
\end{proposition}
\begin{proof}
It suffices to prove that
\begin{enumerate}
\item \label{PerverseNabla} $\nabla_w \in \langle \Delta[\geq 0] \rangle$, the subcategory of $\DShv_{(B)}(Y)$ generated under extensions by $\Delta_v[i]$ for $v \in W$ and $i \geq 0$,
\item \label{PerverseDelta}$\Delta_w \in \langle \nabla[\leq 0] \rangle$, the subcategory of $\DShv_{(B)}(Y)$ generated under extensions by $\nabla_v[i]$ for $v \in W$ and $i \leq 0$.
\end{enumerate}
We will prove \eqref{PerverseNabla} by induction on $\ell(w)$, and \eqref{PerverseDelta} is similar. 
Choose a simple reflection $s$ satisfying $ws < w$. Then by induction $\nabla_{ws} \in \langle \Delta[\geq 0] \rangle$.

\begin{enumerate}
\item[-] If $vs < v$ then $\Delta_v * \nabla_s \simeq \Delta_{vs}$.

\item[-] If $v < vs$ then by \eqref{DeltaToNabla} there is a triangle $\Delta_{vs} \rightarrow \Delta_v * \nabla_s  \rightarrow \Delta_v/(e^{\alpha} - 1).$
\end{enumerate}

\noindent In both cases $\Delta_v * \nabla_s \in \langle \Delta[\geq 0] \rangle$. Hence also $\nabla_w \simeq \nabla_{ws} * \nabla_s \in \langle \Delta[\geq 0] \rangle$ as desired.
\end{proof}

\subsection{Left and right monodromy}
There exist no nontrivial degree 0 maps between different universal standard sheaves, because the $R$-bimodule structures are incompatible.

\begin{lemma}\label{MonodromyWeyl}
The left monodromy actions $R \curvearrowright \Delta_w, \nabla_w$ are obtained by twisting the right actions by $w$.
\end{lemma}
\begin{proof}
The projection map
\[p : Y_w \simeq B/(N \cap wBw^{-1})\dot{w} \rightarrow T\]
is equivariant for 
\begin{enumerate}
\item[-] the left action $T \curvearrowright Y_w$ and multiplication $T \curvearrowright T$,
\item[-] the right action $Y_w \curvearrowleft T$ and $w^{-1}$-twisted multiplication $T \curvearrowleft T$.
\end{enumerate}
Since $R_{Y_w}$ is the pullback under $p$ of the universal local system, the left and right $R$-actions on $R_{Y_w}$ differ by $w$. 
\end{proof}

The following is similar to Lemma 6.2 of \cite{BR}, and contrasts the non-monodromic setting.

\begin{proposition}\label{Hom0}
If $w \neq v$ then $\Hom^0(\Delta_w, \Delta_v) \simeq 0$ in degree 0.
\end{proposition}
\begin{proof}
Let $a: \Delta_w \rightarrow \Delta_v$. Choose $r \in R$ such that $r' \coloneqq w^{-1}(r) - v^{-1}(r) \neq 0$. Lemma \ref{MonodromyWeyl} implies that the right action of $r'$ on the image of $a$ is zero. 

However $\Delta_w/r'$ is perverse by Proposition \ref{Perverse}. Therefore $r'$ acts injectively on $\Delta_w$, and hence also on the image of $a$. Thus $a = 0$ as desired.
\end{proof}

\subsection{Intertwining functors}
Convolution with $\Delta_w$ induce derived autoequivalences that twist the $R$-bimodule structure as follows.

\begin{lemma}\label{Intertwining}
If $K, K' \in \DShv_{(B)}(Y)$, there is an isomorphism of bimodules \beq \label{IntertwiningIsomorphism} R_w \otimes_R \Hom(K , K')  \simeq \Hom(\Delta_w * K, \Delta_w * K').\eeq
\end{lemma}
\begin{proof}
Proposition \ref{InverseConvolve} implies that $\Delta_w * -$ is an equivalence of categories. Moreover Lemmas \ref{MonodromyWeyl} and \ref{MonodromyConvolve} imply that the left monodromy action $R \curvearrowright \Hom(\Delta_w * K, \Delta_w * K')$ coincides with the $w$-twisted left monodromy action $R \curvearrowright \Hom(K, K' )$. Therefore \eqref{IntertwiningIsomorphism} is an isomorphism of $R$-bimodules.
\end{proof}

\begin{lemma}\label{MonodromyConvolve}
If $K, K' \in \DShv_{(B)}(Y)$ then the right monodromy action $K \curvearrowleft R$ and the left monodromy action $R \curvearrowright K'$ induce the same action $R \curvearrowright K * K'$.
\end{lemma}
\begin{proof}
This follows because $m$ is equivariant for $T \curvearrowright G \times^N Y$ by $t(g, y) = (gt^{-1}, ty)$ and $T \curvearrowright Y$ trivially.
\end{proof}

\section{The big tilting sheaf} Here we construct the universal big tilting sheaf, then check that it admits standard and costandard filtrations.
There are three ways to construct the pro-unipotent big tilting sheaf:
\begin{enumerate}
\item Take an indecomposable summand of a Bott--Samelson tilting sheaf. In our universal setting it is not clear why this summand has the `correct' size (i.e. its pushforward to the flag variety is still indecomposable). Roughly for this reason Soergel works over a local ring in Theorem 6 of \cite{So}.
\smallskip

\item Use the \cite{BBM} tilting extension construction as in Lemma A.7.3 of \cite{BY} or Proposition 5.12 of \cite{BR}. Remark A.5.5 of \cite{BY} and Lemma 5.3 of \cite{BR} use that the formal completion of $R$ is a local ring, so it is unclear how to get a universal tilting extension of the `correct' size.
\smallskip

\item Start with the Whittaker sheaf supported on the open $B^-$ orbit. Then average it to be weakly $B$-constructible.
\end{enumerate}
We use the third approach for the reasons explained above. 
Our proof that $\Xi \coloneqq \Av_{(B)!} \chi_Y$ admits standard and costandard filtrations is different from Lemma 10.1 of \cite{BR}, because the universal big tilting sheaf is not the projective cover of a simple object.

\subsection{Whittaker averaging}
Here we construct the big tilting sheaf by averaging the Whittaker sheaf. It corepresents a certain vanishing cycles functor that we prove is monoidal.

Ionov and Yun use a similar construction in the pro-unipotent setting \cite{IY}. They define the Whittaker functional in terms of microlocalization and prove that it is monoidal. 

Let $\chi_{\bC}$, described in Equation (8.6.3) of \cite{KS}, be the weakly $\bC^{\times}$-constructible sheaf on $\bC$ that corepresents vanishing cycles. Beware that $\chi_{\bC}$ is not a character sheaf, unlike the Artin--Schreier sheaf or exponential D-module.

Let $f: N^- \rightarrow \bC$ be a character that vanishes on $[N^-, N^-]$ and is nontrivial on each negative simple root space. 
Let $\chi_{N^-} \coloneqq f^* \chi_{\bC}$ be the Whittaker sheaf on $N^-$. Let $\chi_{B^-} \coloneqq i_! \chi_{N^-}[-\dim T]$ be its extension along $i: N^- \rightarrow B^-$. Let $\chi_Y$ be its !-extension to $Y$ from the open $B^-$ orbit. 

\begin{definition}
Define the big tilting sheaf \[\Xi \coloneqq \Av_{(B)!} \chi_Y  \; \in \; \DSHV_{(B)}'(Y).\] Here $\Av_{(B)!}$, constructed similarly to \eqref{AvT}, is the left adjoint to forgetting weak $B$-constructibility. 
\end{definition}

\begin{definition} Define Soergel's functor \[\bV \coloneqq \Hom(\Xi, -): \DShv_{(B)}(Y) \rightarrow \DBim(R),\] taking values in the derived category of $R$-bimodules. 
\end{definition}

By Lemma \ref{Contraction}, Soergel's functor calculates $\phi_{f, 1}(i^!K|_{N^-})[\dim T]$, the stalk at $1 \in N^-$ of vanishing cycles for $f$. Indeed if $K \in \DShv_{(B)}(Y)$ then the sheaf $\underline{\Hom}(\chi_{N^-}, i^!K|_{N^-})$ is weakly constructible for the adjoint $\bC^{\times}$-action via the coweight $2\rho$.

\subsection{Monoidality}
According to \cite{LNY}, Soergel's functor is lax monoidal, and below we prove strictness. In the pro-unipotent setting, monoidality is proved in Lemma 4.6.4 of \cite{BY} using intersection cohomology sheaves.

\begin{proposition}\label{Monoidal}
For $K, K' \in \DShv_{(B)}(Y)$ we have $\bV(K) \otimes_R \bV(K') \simeq \bV(K * K')$.
\end{proposition}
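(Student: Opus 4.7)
My plan is to replace $\Xi$ in Soergel's functor by the Whittaker sheaf $\chi$ via adjunction, and then factor the convolution using the geometry of the open $B^-$-orbit together with the multiplicativity of the Whittaker character.

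First I would apply the defining adjunction $\Av_{(B)!} \dashv \mathrm{oblv}$ to rewrite
\[\bV(K) = \Hom(\Xi, K) = \Hom(\chi, K),\]
with the right-hand Hom computed after forgetting weak $B$-constructibility. Since $\chi$ is $!$-extended from the open $B^-$-orbit $U \subset Y$, this Hom is determined by the restriction $K|_U$.

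Next, writing $K * K' = m_!(K \boxtimes K')$ on the appropriate twisted product with action map $m : G \times^B Y \to Y$, I would use that $m$ is a $G/B$-fibration, hence proper, and pass through adjunction to obtain
\[\bV(K * K') = \Hom(m^* \chi,\, K \boxtimes K').\]
The central geometric input is that the intersection of $m^{-1}(U)$ with the preimage of $U$ under the first projection admits a product decomposition of the form $U \times^T U$, using the transversality $B^- \cap B = T$ and the open Bruhat cell in $G/B$. On this locus, $m^* \chi$ identifies with $\chi \boxtimes \chi$, reflecting that the Whittaker character $\psi : N^- \to \bC$ is a group homomorphism. Applying the Künneth formula then yields
\[\bV(K * K') = \bV(K) \otimes_R \bV(K'),\]
where the $\otimes_R$ arises precisely from the diagonal $T$-quotient in $U \times^T U$.

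The main obstacle will be controlling contributions to the Hom from outside the open factorized locus. Here the nondegeneracy of $\psi$ should force vanishing of $\chi$'s contribution elsewhere, exactly as in the cleanness statements for Whittaker sheaves. A secondary check is that the resulting isomorphism intertwines the $R$-bimodule structures on both sides, which I would verify by tracking left and right $T$-monodromies through each step of the convolution diagram, noting that the middle $T$-quotient is what replaces the formal $R$-tensor on the bimodule side.
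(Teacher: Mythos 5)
Your strategy is the same in broad outline as the paper's: reduce to the open $B^-$-orbit, use the multiplicativity of $\psi$ to factorize the Whittaker sheaf, and identify the $R$-tensor with a $T$-quotient. The "main obstacle" you flag — contributions from outside the factorized locus — is exactly the content of the paper's Lemma \ref{OpenOpposite}, and your instinct that an $N^-_\alpha$-equivariance argument kills those contributions is right. However, there are two concrete errors and one substantive gap.

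First, the convolution space is wrong. Because universal monodromic sheaves are $N$-equivariant but only \emph{weakly} $T$-constructible (not $T$-equivariant), the twisted product must be taken as $G \times_N Y$, not $G \times^B Y$. The fiber of $m : G \times_N Y \to Y$ is $G/N$, which is \emph{not} proper, so the step "$m$ is a $G/B$-fibration, hence proper; pass through adjunction" does not apply. The paper circumvents this by noting $m_! = m_*[\dim T]$ on weakly $T$-constructible objects; you would need to import that shift explicitly, and it propagates into the identification of the corepresenting object.

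Second, "apply the Künneth formula" is doing all the hard work and is not justified. The functor $\bV$ is corepresented by a Whittaker sheaf with infinite-dimensional ($R$-perfect) stalks, so the identity $\Hom(\chi\boxtimes\chi,\,A\boxtimes B) \cong \Hom(\chi,A)\otimes\Hom(\chi,B)$ is not a formal Künneth statement. The paper's proof gets this from the interpretation of vanishing cycles as a stalk of the Fourier--Sato transform (Kashiwara--Schapira, Prop.\ 3.7.15), which is the essential analytic input. Without some version of that identification, your passage to the tensor product is unproved. Relatedly, your claim that the $\otimes_R$ "arises precisely from the diagonal $T$-quotient in $U\times^T U$" is plausible but is never pinned down: the paper makes this precise by first obtaining the tensor over $k$ on $B^-\times B^-$, then $T$-averaging and applying $(-)\otimes_R k$ where $k$ is the augmentation module at $1 \in \check T$, to produce $b^*\chi_{B^-}$. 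You would need an analogous concrete step relating $\Hom$ on the genuine $T$-quotient $U\times^T U$ to the balanced tensor product of $R$-bimodules; simply asserting it is where the argument currently stops short.
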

\begin{proof}
Let $n: N^- \times N^- \rightarrow N^-$ and $b: B^- \times B^- \rightarrow B^-$ be multiplication. Let $\chi_{B^- \times B^-}$ be the extension to $B^- \times B^-$  of $n^* \chi_{N^-}[-2 \dim T]$.
Equation (3.7.12) and Exercise VIII.13 of \cite{KS} imply \beq \label{VLaxMonoidal}\bV(K) \otimes \bV(K') \simeq \Hom(\chi_{B^- \times B^-}, K|_{B^-} \boxtimes K'|_{B^-}).\eeq

Let $\Av_{(T)!} \chi_{B^- \times B^-}$ be obtained by averaging $\chi_{B^- \times B^-}$ to be weakly constructible for $T \curvearrowright B^- \times B^-$ by \[a: T \times B^- \times B^- \rightarrow B^- \times B^-, \qquad (t, b, b') \mapsto (bt^{-1}, tb').\] Tensoring with $k_1$, the augmentation $R$-module at $1 \in \check{T}$, gives \beq \label{Claim} k_1 \otimes_R \Av_{(T)!} \chi_{B^- \times B^-}[-\dim T] \simeq a_!(k_T \boxtimes \chi_{B^- \times B^-})[\dim T] \simeq b^* \chi_{B^-},\eeq 
by the following commuting diagram 
\[\begin{tikzcd}[cramped]
& B^- \times B^- \arrow[r, "b"] \ar[dr, phantom, "\usebox\pullback", very near start] & B^- \\
T \times B^- \times B^- \arrow[ur, "a"] \arrow[d, "\text{project}"'] \ar[dr, phantom, "\usebox\pullback", very near start]& \arrow[l, hook'] T \times N^- \times N^- \simeq b^{-1}(N^-) \arrow[u, hook] \arrow[r] \arrow[d] & N^- \arrow[u, hook] \\
B^- \times B^- & \arrow[l, hook'] N^- \times N^-.\arrow[ur, "n"'] & \\
\end{tikzcd}\]
\vspace{-.7cm}

\noindent Therefore Lemma \ref{OpenOpposite} implies \begin{align*}\bV(K) \otimes_R \bV(K') &\overset{\eqref{VLaxMonoidal}}{\simeq} \Hom_R(k_1, \Hom(\chi_{B^- \times B^-}, K|_{B^-} \boxtimes K'|_{B^-})[\dim T] \\ &\overset{\eqref{TensorHom}}{\simeq} \Hom_{B^- \times B^-}(k_1 \otimes_R \Av_{(T)!} \chi_{B^- \times B^-}, K|_{B^-} \boxtimes K'|_{B^-})[\dim T] \\ &\overset{\eqref{Claim}}{\simeq} \Hom_{B^- \times B^-}(b^* \chi_{B^-}, K|_{B^-} \boxtimes K'|_{B^-}) \\ &\overset{\eqref{OpenV}}{\simeq} \bV(K * K'). \qedhere\end{align*}
\end{proof}

Let $j_w^-:Y_w^- \coloneqq B^- \dot{w} \hookrightarrow Y$ be the opposite Borel orbit through $w \in W$. The following lemma allowed us to calculate $\bV(K * K')$ by restricting both factors to the open orbit $Y_1^- = B^-$.

\begin{lemma}\label{OpenOpposite}
If $K, K' \in \DShv_{(B)}(Y)$ then \beq \label{OpenV} \bV(K * K') \simeq \Hom(\chi_{B^-}, b_*(K|_{B^-} \boxtimes K'|_{B^-})).\eeq
\end{lemma}
\begin{proof}
For $w \neq 1 \in W$, some negative simple root space $N_{\check{\alpha}}^-$ acts trivially on $Y_w^-$. Because the convolution map is left $N_{\check{\alpha}}^-$ equivariant, $F \coloneqq ((j_w^-)_* (j_w^-)^! K) * K'$ is equivariant for the left action $N_{\check{\alpha}}^- \curvearrowright Y$. Therefore $f_*i^!F$ is equivariant for translation $\bC \curvearrowright \bC$ (i.e. is a constant sheaf) so it has no vanishing cycles \[\Hom(\chi_Y, ((j_w^-)_* (j_w^-)^! K) * K') \simeq \Hom(\chi_{N^-}, i^!F)[\dim T] \simeq \Hom(\chi_{\bC}, f_*i^!F)[\dim T] \simeq 0.\]
Hence there is an isomorphism \[\Hom(\chi_Y, K * K') \xrightarrow{\sim} \Hom(\chi_Y, ((j_1^-)_* K|_{B^-}) * K') \simeq \Hom(\chi_{B^-}, b_*(K|_{B^-} \boxtimes K'|_{B^-})).\qedhere\]
\end{proof}

\subsection{Simple reflection calculations}
Let $s$ be a simple reflection. On the corresponding negative simple root space, the Whittaker character induces an isomorphism $r:\bC \xrightarrow{\sim} N^-_{\check{\alpha}}$. There exists $y \in \bC$ and a lift $\dot{s} \in N(T)$ such that $r(1) N = \dot{s} r(y) N$.
Identify 
\begin{enumerate}[label=(\alph*)]
\item \label{BYs} $\bC \times T \xrightarrow{\sim} \overline{Y}_s \cap B^-$ by $(x, t) \mapsto r(x)tN$,
\item \label{Ys}$\bC \times T \xrightarrow{\sim} Y_s$ by $(x, t) \mapsto   \dot{s} r(yx) tN$.
\end{enumerate}
We claim that $\overline{Y}_s = P_s/N \rightarrow P_s/B$ is the $T$-torsor $\cO(-\alpha)$ over $\bP^1$. 

\begin{lemma} \label{Oalpha} The two trivializations differ by the transition function \[\bC^{\times} \times T \overset{\ref{BYs}}{\simeq} Y_s \cap B^- \overset{\ref{Ys}}{\simeq}  \bC^{\times} \times T, \qquad (z, t) \mapsto (z^{-1}, \alpha(z)t).\]
\end{lemma}
\begin{proof}
Any $z \in \bC^{\times}$ can be written $z = \check{\alpha}(t^{-1})$ for some $t \in T$. Therefore 
\[r(z)N 
= tr(1)t^{-1}N 
= t \dot{s} r(y) t^{-1}N
= \dot{s} ((\dot{s}^{-1} t \dot{s}) r(y) (\dot{s} t^{-1} \dot{s}^{-1}))(\dot{s} t \dot{s}^{-1} t^{-1})N 
= \dot{s} r(yz^{-1}) \alpha(z) N.\qedhere\]
\end{proof}

Let $\nabla_s^-$ denote the pullback of $\nabla_s$ along $\bC \times T \overset{\ref{BYs}}{\simeq} \overline{Y}_s \cap B^- \hookrightarrow Y$. We now calculate its vanishing cycles and stalk. 

\begin{lemma}\label{VanishingStalk}
Let $R_T$ be the universal local system on $T$. Then 
\begin{enumerate}
\item\label{Vanishing} $\Hom(\chi_{\bC} \boxtimes R_T, \nabla_s^-) \simeq R[\dim T]$,
\item\label{Stalk}$\Hom(k_{\bC} \boxtimes R_T, \nabla_s^-) \simeq R/(e^{\alpha} - 1)[\dim T]$.
\end{enumerate}
\end{lemma}
\begin{proof}
Let $\exp: \bC \rightarrow \bC$ be the exponential map, and $\delta$ be the skyscraper sheaf at $0 \in \bC$. Equation (8.6.6) of \cite{KS} gives a triangle \beq \label{VarTriangle} \exp_! k_{\bC}[1] \rightarrow \chi_{\bC} \rightarrow \delta.\eeq 
Therefore $\Hom(\delta \boxtimes R_T, \nabla_s^-) \simeq 0$ implies \beq\label{var} \Hom(\chi_{\bC} \boxtimes R_T, \nabla_s^-) \overset{\var}{\simeq} \Hom(\exp_! k_{\bC}[1] \boxtimes R_T, \nabla_s^-).\eeq

The following two observations imply $\exp^!(\nabla_s^-) \simeq k_{\bC}[1] \boxtimes R_T[\dim T]$, and hence \beq \label{NearbyRs} \Hom(\exp_! k_{\bC}[1] \boxtimes R_T, \nabla_s^-) \simeq \Hom(k_{\bC}[1] \boxtimes R_T, \exp^! (\nabla_s^-)) \simeq R[\dim T].\eeq
\begin{enumerate}
\item[-] Lemma \ref{Oalpha} implies $\nabla_s^-$ is the pullback of the universal local system along \beq \label{PullbackAlpha}\bC^{\times} \times T \overset{\ref{BYs}}{\simeq} Y_s \cap B^- \overset{\ref{Ys}}{\simeq} \bC^{\times} \times T \rightarrow T, \qquad (z, t) \mapsto \alpha(z)t.\eeq
\item[-] Lemma \ref{tEquivariant} implies that $R_T$ is equivariant for $\bC \xrightarrow{\exp} \bC^{\times} \curvearrowright
 T$ acting by the coweight $\alpha$. 
 \end{enumerate}
Combining \eqref{var} and \eqref{NearbyRs} implies Part \eqref{Vanishing}.

The triangle $k_{\bC} \rightarrow \chi_{\bC} \rightarrow \exp_! k_{\bC}[1]$ induces a triangle \[\Hom(\exp_! k_{\bC}[1] \boxtimes R_T, \nabla_s^-) \xrightarrow{\can} \Hom(\chi_{\bC} \boxtimes R_T, \nabla_s^-) \rightarrow  \Hom(k_{\bC} \boxtimes R_T, \nabla_s^-).\]
Equation \eqref{PullbackAlpha} implies that $\nabla_s^-$ is equivariant for $z \in \bC^{\times} \curvearrowright \bC \times T$ by \[\bC \times T \rightarrow \bC \times T, \qquad (x, t) \mapsto (zx, \alpha(z)^{-1} t).\] Therefore monodromy of nearby cycles acts by $e^{\alpha} \in R$ on \eqref{NearbyRs}. Equation (8.6.8) of \cite{KS} says that $\var \circ \can$ acts by $1 - e^{\alpha}$ on \eqref{NearbyRs}. Therefore \eqref{var} implies $\Hom(k_{\bC} \boxtimes R_T, \nabla_s^-) \simeq R/(e^{\alpha} - 1)[\dim T]$.
\end{proof}

\subsection{Vanishing cycles calculations}
Here we calculate vanishing cycles of standards and costandards, using the above simple reflection calculations and convolution.

\begin{proposition}\label{VStandard}
If $M \in \DMod(R)$ and $w \in W$ then
\begin{enumerate}
\item \label{VStandard1} $\bV(M \otimes_R \nabla_1) \simeq M$,
\item \label{VStandard2} $\bV(\Delta_w) \simeq \bV(\nabla_w) \simeq R_w$,
\item \label{VStandard3} $\nabla_w * \Xi  \simeq \Delta_w * \Xi  \simeq \Xi$.
\end{enumerate}
\end{proposition}
\begin{proof}
For \eqref{VStandard1}, the triangle \eqref{VarTriangle} and vanishing $\Hom(\exp_! k_{\bC}[1], \delta) \simeq 0$ imply \[\bV(M \otimes_R \nabla_1) \simeq \Hom_{\bC \times T}(\chi_{\bC} \boxtimes R_T, \delta \boxtimes M_T) \simeq \Hom_{\bC \times T}(\delta \boxtimes R_T, \delta \boxtimes M_T) \simeq M.\]
For \eqref{VStandard2}, it suffices by Proposition \ref{Monoidal} to assume that $w = s$ is a simple reflection. Lemma \ref{VanishingStalk}\eqref{Vanishing} says $\bV(\nabla_s) \simeq R_s$, and a similar argument shows $\bV(\Delta_s) \simeq R_s$.
For \eqref{VStandard3}, Lemma \ref{Intertwining} and Proposition \ref{Monoidal} give an isomorphism of functors \[\Hom(\nabla_w * \Xi, -) \simeq \Hom(\Xi, \Delta_{w^{-1}} * - ) \simeq \Hom(\Xi, -)\] from $\DShv_{(B)}(Y) \rightarrow \DMod(R)$. By the Yoneda lemma $\nabla_w * \Xi \simeq \Xi$, and similarly $\Delta_w * \Xi \simeq \Xi$.
\end{proof}

\subsection{Standard and costandard filtrations}
Here we show that $\Xi$ admits standard and costandard filtrations, in particular it is compact and perverse. The Whittaker construction was only needed to construct $\Xi$, and will not be used in the rest of the paper.

\begin{proposition}\label{XiCoStandard}
The big tilting sheaf $\Xi$ admits standard and costandard filtrations with each $\Delta_w$ and $\nabla_w$ appearing exactly once. 
\end{proposition}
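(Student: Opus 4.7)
The overall plan is to compute the Hom spaces $\Hom(\Xi, \nabla_w)$ and $\Hom(\Delta_w, \Xi)$, showing each is a free right $R$-module of rank one concentrated in cohomological degree zero, and then to extract the filtrations by the standard inductive gluing along the Bruhat order.

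For $\Hom(\Xi, \nabla_w)$, the plan is to use the adjunction $(\Av_{(B)!}, \text{forget})$ to reduce to $\Hom_Y(\chi, \nabla_w)$, and then to use that $\chi = j_{1!}^- \chi_{B^-}$ is $!$-extended from the open opposite orbit $Y_1^-$ to rewrite this as $\Hom_{Y_1^-}(\chi_{B^-}, \nabla_w|_{Y_1^-})$. The opposite Bruhat cell $Y_1^-$ meets each Schubert cell $Y_w$ in a dense open $U_w$, on which $\nabla_w$ restricts to the universal local system $R_{Y_w}|_{U_w}$. The $(N^-, \psi)$-equivariance of $\chi_{B^-}$ then allows this Hom to be interpreted as a vanishing cycles computation against a universal local system on $U_w$, exactly as in the proof of Proposition \ref{Monoidal} via proposition 3.7.15 of \cite{KS}; the answer is $R$ concentrated in degree zero.

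The main obstacle is $\Hom(\Delta_w, \Xi)$, since the defining adjunction for $\Av_{(B)!}$ no longer applies in this direction. The plan is to reduce to computing $j_w^! \Xi$ via $\Hom_Y(\Delta_w, \Xi) = \Hom_{Y_w}(R_{Y_w}, j_w^! \Xi)$ and then apply a base change for $j_w^! \circ \Av_{(B)!}$ to express the problem in terms of a pushforward of $\chi$ over fibres of the $B$-action map landing in $Y_w$. Because $\chi$ is supported on $Y_1^-$ and $Y_1^- \cap Y_w = U_w$, this collapses onto $U_w$ and the Whittaker argument again yields $R$ in degree zero. An alternative route goes through Verdier duality: the identity $\mathbb{D} \circ \Av_{(B)!} = \Av_{(B)*} \circ \mathbb{D}$ together with the near self-duality of the Artin-Schreier sheaf produces a Whittaker description of $\mathbb{D}\Xi$, reducing $\Hom(\Delta_w, \Xi) = \Hom(\mathbb{D}\Xi, \nabla_w)$ to a variant of the first case.

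With the two Hom computations, $j_w^! \Xi$ is a rank-one universal local system concentrated in a single degree, and dually for $j_w^* \Xi$. The standard filtration is then constructed by induction on the closure ordering: order $W$ compatibly with length, set $Z_i = \bigcup_{j \leq i} Y_{w_j}$, and at each step attach a morphism $\Delta_{w_i} \to \Xi$ furnished by the rank-one generator of $\Hom(\Delta_{w_i}, \Xi)$ using the recollement triangle for the closed embedding $Z_{i-1} \subset Z_i$. Vanishing of $\Ext^{>0}(\Delta_w, \Xi)$, a consequence of the degree-zero concentration, prevents any gluing obstruction and ensures each $\Delta_w$ appears exactly once; the costandard filtration is built dually from $\Hom(\Xi, \nabla_w) = R$.
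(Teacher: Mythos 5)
The diagnosis that $\Hom(\Delta_w,\Xi)$ is the bottleneck (because $\Av_{(B)!}$ is a left adjoint) is exactly right, and the reduction of $\Hom(\Xi,\nabla_w)$ to a Whittaker computation on $Y_1^-$ is in the right spirit. But neither of your two plans for $\Hom(\Delta_w,\Xi)$ closes the gap, and the paper's actual mechanism --- the convolution identity $\Xi*\Delta_w = \Xi$ --- is absent from your proposal.

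Plan A asks for a base change identity of the form $j_w^!\circ a_! \simeq (a|)_!\circ \tilde{j}_w^!$, where $a$ is the $B$-action map and $j_w$ a locally closed immersion. That identity is available when $a$ is proper or $j_w$ is smooth, and neither holds here: $a$ is a smooth non-proper map and $j_w$ is a stratum inclusion. Proper base change gives $j_w^*\circ a_!$, not $j_w^!\circ a_!$, and there is no cheap way to pass from one to the other for a non-open stratum. Even setting this aside, the claimed ``collapse onto $U_w$'' is not immediate: $\Av_{(B)!}\chi$ has support on all of $Y$, and the fiber of the averaging diagram over a point of $Y_w$ is $\{b\in B: b^{-1}x\in Y_1^-\}$, not $U_w$.

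Plan B (Verdier duality) also fails to move the averaging to the side where adjunction helps. From $\mathbb{D}\Xi = \Av_{(B)*}\mathbb{D}\chi$ one gets $\Hom(\Delta_w,\Xi) \cong \Hom(\mathbb{D}\Xi,\nabla_w^\sigma)$ (with an inversion twist $\sigma$ on $R$), but $\Av_{(B)*}$ is a \emph{right} adjoint to the forgetful functor, so it simplifies $\Hom(-,\Av_{(B)*}L)$, not $\Hom(\Av_{(B)*}L,-)$. To make the adjunction apply you would need Whittaker cleanness, $\Av_{(B)!}\mathbb{D}\chi = \Av_{(B)*}\mathbb{D}\chi$, which is essentially the content of lemma 10.1 of \cite{BR} that the paper explicitly says fails in the universal setting (and is why this proposition requires a new argument at all); ``near self-duality of the Artin--Schreier sheaf'' is a placeholder for this nontrivial input, not a proof of it.

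The idea the paper actually uses is disjoint from both plans: by Proposition \ref{Monoidal} the corepresented functor $\bV$ is monoidal, which yields $\Xi*\Delta_w\simeq\Xi$ for all $w$ (Lemma \ref{VStandard}). This reduces $\Hom(\Delta_w,\Xi)$ to $\Hom(\Delta_{w_0},\Xi)$ via $\Hom(\Delta_{w_0}*\nabla_{w_0^{-1}w},\Xi)\simeq\Hom(\Delta_{w_0},\Xi*\Delta_{w^{-1}w_0})$, and the $w_0$ case is trivial because $Y_{w_0}$ is open. Without some version of this reduction (or a genuine proof of cleanness), your computation of $\Hom(\Delta_w,\Xi)$ is not established. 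A smaller point: Lemma \ref{TiltingCriterion} pairs the standard filtration with freeness of $\Hom(K,\nabla_w)$ and the costandard filtration with freeness of $\Hom(\Delta_w,K)$; your last paragraph has these swapped.
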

\begin{proof}
The $w$-graded piece of the Cousin filtration on $\Xi$ is of the form $M \otimes_R \Delta_w$, for some $M \in \DMod(R)$. Propositions \ref{Monoidal} and \ref{VStandard} give an isomorphism of functors \[\Hom(M, -) \simeq \Hom(\Xi, - \otimes_R \nabla_w) \simeq \Hom(\Xi, - \otimes_R \nabla_1) \simeq \Hom(R, -)\] from $\DMod(R) \rightarrow \DMod(R)$. By the Yoneda lemma $M \simeq R$. Therefore $\Xi$ admits a standard filtration with each $\Delta_w$ appearing exactly once. 

Calculating $\Hom(\Delta_w, \Xi)$ appears more difficult because $\Xi$ is defined using left adjoints. But Proposition \ref{VStandard} says $\Xi \simeq \nabla_{w_0} * \Xi$, where $w_0 \in W$ is  the longest element. Since $\Xi$ admits a standard filtration, $\Xi \simeq \nabla_{w_0} * \Xi$ admits a costandard filtration with each $\nabla_{w_0v^{-1}} \simeq \nabla_{w_0} * \Delta_v$ appearing exactly once.
\end{proof}



\section{Calculations in semisimple rank one}\label{SmallTiltSec}
Here we construct the tilting sheaf $\Xi_s$ supported on $\overline{Y}_s$ and calculate its endomorphisms. Here $s$ is the simple reflection corresponding to a simple coroot $\alpha$.

\subsection{Construction} Here we construct the simple reflection tilting sheaf, and characterize it uniquely by its standard and costandard filtration.

\begin{proposition}\label{Rank1Stalk}
For $s$ a simple reflection,
\begin{enumerate}
\item \label{StalkNabla}$\nabla_s|_{Y_1} \simeq R_{Y_1}/(e^{\alpha} - 1)[\dim T]$,
\item $\Ext^1(\Delta_1, \Delta_s) \simeq \Ext^1(\nabla_s, \nabla_1) \simeq R/(e^{\alpha} - 1)$.
\end{enumerate}
\end{proposition}
\begin{proof} 
Part \eqref{StalkNabla} follows by Lemma \ref{VanishingStalk}\eqref{Stalk}.
It implies there is a short exact sequence \beq \label{DeltaToNabla} 0 \rightarrow \Delta_s \rightarrow \nabla_s \rightarrow \Delta_1/(e^{\alpha} - 1) \rightarrow 0.\eeq
Taking $\Hom(\Delta_1, -)$ shows \beq \label{ExtDelta}\Ext^1(\Delta_1, \Delta_s) \simeq \Hom^0(\Delta_1, \Delta_1/(e^{\alpha} - 1)) \simeq R/(e^{\alpha} - 1).\eeq 
Taking $\Hom(-, \nabla_1)$ shows \beq \label{ExtNabla}\Ext^1(\nabla_s, \nabla_1) \simeq \Ext^1(\Delta_1/(e^{\alpha} - 1), \nabla_1) \simeq R/(e^{\alpha} - 1). \qedhere\eeq 
\end{proof}


\begin{definition}
Define the simple reflection tilting sheaf \[\Xi_s \coloneqq \ker(\nabla_s \oplus \Delta_1 \rightarrow \Delta_1/(e^{\alpha} - 1)).\]
\end{definition}
\noindent By \eqref{ExtDelta} there is a standard filtration \beq \label{Ext} 0 \rightarrow \Delta_s \rightarrow \Xi_s \rightarrow \Delta_1 \rightarrow 0  \quad \text{classified by} \quad 1 \in R/(e^{\alpha} - 1) \simeq \Ext^1(\Delta_1, \Delta_s).\eeq  By \eqref{ExtNabla} there is a costandard filtration \[0 \rightarrow \nabla_1 \rightarrow \Xi_s \rightarrow \nabla_s \rightarrow 0  \quad \text{classified by} \quad 1 \in R/(e^{\alpha} - 1) \simeq \Ext^1(\nabla_s, \nabla_1).\]
The extension classes of the above sequences are defined up to scaling by a unit in $R/(e^{\alpha} - 1)^{\times}$. 


\begin{lemma} \label{XisUnique} If $K \in \DShv_{(B)}(Y)$ admits standard and costandard filtrations \beq \label{FilUnique} 0 \rightarrow \Delta_s \rightarrow K \rightarrow \Delta_1 \rightarrow 0 \quad  \text{and} \quad 0 \rightarrow \nabla_1 \rightarrow K \rightarrow \nabla_s \rightarrow 0\eeq then $K \simeq \Xi_s$ is the simple reflection tilting sheaf. 
\end{lemma}
\begin{proof}
Taking $\Hom(\Delta_1, -)$ into the standard filtration \eqref{Ext} gives an exact sequence \[\Hom^0(\Delta_1, \Delta_1) \xrightarrow{\delta} \Ext^1(\Delta_1, \Delta_s)  \rightarrow \Ext^1(\Delta_1, K),\] and the extension class of the standard filtration is classified by $\delta(1)$. By the costandard filtration, $\Ext^1(\Delta_1, K) \simeq 0$. Therefore $\delta(1)$ generates $\Ext^1(\Delta_1, \Delta_s) \simeq R/(e^{\alpha} - 1)$ and classifies $K \simeq \Xi_s$.
\end{proof}

\subsection{Properties}
The simple reflection tilting sheaf can be obtained by restricting the big tilting.

\begin{lemma}\label{XiRestrict}
There is an isomorphism $\Xi|_{\overline{Y}_s} \simeq \Xi_s$.
\end{lemma}
\begin{proof}
By Proposition \ref{VStandard} and adjunction, \[\Hom(\Xi|_{\overline{Y}_s}, \nabla_1) \simeq R_1 \quad \text{and} \quad \Hom(\Xi|_{\overline{Y}_s}, \nabla_s) \simeq R_s.\] Hence there is a standard filtration \beq \label{XiRestrictStandard} 0 \rightarrow \Delta_s \rightarrow \Xi|_{\overline{Y}_s} \rightarrow \Delta_1 \rightarrow 0.\eeq 

There is an isomorphism of functors \[\Hom(\Xi|_{\overline{Y}_s} * \nabla_s, -) \simeq \Hom(\Xi|_{\overline{Y}_s}, - * \Delta_s) \simeq \Hom(\Xi, - * \Delta_s) \simeq \Hom(\Xi, -) \simeq \Hom(\Xi|_{\overline{Y}_s}, -)\] from $\DShv_{(B)}(\overline{Y}_s) \rightarrow \DMod(R)$.
By the Yoneda lemma $\Xi|_{\overline{Y}_s} \simeq \Xi|_{\overline{Y}_s}*\nabla_s$. Convolving \eqref{XiRestrictStandard} by $\nabla_s$ gives a costandard filtration \[0 \rightarrow \nabla_1 \rightarrow \Xi|_{\overline{Y}_s} \rightarrow \nabla_s \rightarrow 0.\] Lemma \ref{XisUnique} implies the desired $\Xi|_{\overline{Y}_s} \simeq \Xi_s$.
\end{proof}

The following lemma will be needed in the proof Lemma \ref{BSSheafLoc}. 

\begin{lemma} \label{Rank1Square} The self convolution of a simple reflection tilting sheaf is $\Xi_s * \Xi_s \simeq \Xi_s \oplus \Xi_s$.\end{lemma}
\begin{proof}
Proposition \ref{InverseConvolve} says $\Delta_s*-$ is a derived equivalence. Therefore \[0 \rightarrow \Delta_s \rightarrow \Delta_s * \Xi_s \rightarrow \Delta_1 \rightarrow 0 \quad \text{is classified by} \quad 1 \in R/(e^{\alpha}-1) \simeq \Ext^1(\Delta_1, \Delta_s),\] hence $\Delta_s * \Xi_s \simeq \Xi_s$. 

Equation \eqref{StandardtoCostandard} implies $\Ext^1(\Xi_s, \Xi_s) \simeq 0$. Therefore the following splits \[0 \rightarrow \Delta_s * \Xi_s \rightarrow \Xi_s * \Xi_s \rightarrow \Delta_1 * \Xi_s \rightarrow 0,\] giving the desired $\Xi_s * \Xi_s \simeq \Xi_s \oplus \Xi_s$.
\end{proof}

\subsection{Endomorphisms}
Below we calculate endomorphisms of the simple reflection tilting sheaf. In Appendix C of \cite{BY}, Yun performs a similar calculation using Frobenius weights.

\begin{proposition}\label{SimpleEnd}
There are isomorphisms $\bV(\Xi_s) \simeq \Hom(\Xi_s, \Xi_s) \simeq R \otimes_{R^s} R$.
\end{proposition}
\begin{proof}
The kernel of $\Xi \rightarrow \Xi|_{\overline{Y}_s}$ is filtered by standards $\Delta_w$ indexed by $w \neq 1, s$. Therefore Lemma \ref{XiRestrict} implies $\bV(\Xi_s) \simeq \Hom(\Xi|_{\overline{Y}_s}, \Xi_s) \simeq \Hom(\Xi_s, \Xi_s)$.

Equation \eqref{StandardtoCostandard} implies $\Hom(\Xi_s, \Xi_s)$ is concentrated in degree 0. By adjunction $\Hom(\Delta_s, \Delta_1) \simeq 0$, so there is an $R$-bimodule map \[\gr: \Hom(\Xi_s, \Xi_s) \rightarrow R_1 \times R_s, \qquad a \mapsto (\gr_1(a), \gr_s(a))\] making the following diagram commute
\[\begin{tikzcd}[cramped]
0 \arrow[r] & \Delta_s \arrow[d, "\gr_s(a)"] \arrow[r] & \Xi_s \arrow[d, "a"] \arrow[r] & \Delta_1 \arrow[r] \arrow[d, "\gr_1(a)"]& 0 \\
0 \arrow[r] & \Delta_s \arrow[r] & \Xi_s \arrow[r] & \Delta_1 \arrow[r] & 0. \\
\end{tikzcd}\]
\vspace{-.7cm} 

\noindent Moreover $\gr$ is injective because Proposition \ref{Hom0} says $\Hom^0(\Delta_1, \Delta_s) \simeq 0$.

Let $a \in \Hom(\Xi_s, \Xi_s)$ and write $a = b + \gr_1(a)$. Then $\gr_1(b) = 0$ so $b$ factors through $\Delta_s$ as shown in the commuting diagram 
\[\begin{tikzcd}[cramped]  & \Delta_s \arrow[r] \arrow[d, "\gr_s(b)"'] & \Xi_s \arrow[d, "b"] \arrow[dl, "b'"', dashed] & & \\  0 \arrow[r] &\Delta_s \arrow[r] & \Xi_s \arrow[r] & \Delta_1 \arrow[r] & 0.\\ \end{tikzcd}\]
\vspace{-.7cm} 

\noindent Taking $\Hom(-, \Delta_s)$ into the standard filtration \eqref{Ext} gives an exact sequence \[\Hom^0(\Xi_s, \Delta_s) \xrightarrow{\gamma} \Hom^0(\Delta_s, \Delta_s) \xrightarrow{\delta} \Ext^1(\Delta_1, \Delta_s) \simeq R/(e^{\alpha} - 1) \rightarrow \Ext^1(\Xi_s, \Delta_s) \simeq 0.\] Since $\delta$ is surjective, $\gr_s (b) = \gamma (b')$ (the restriction to $\Delta_s$ of $b'$) vanishes on $\check{T}_{\alpha}$. Hence $\gr_s (a)$ and $\gr_1 (a)$ agree along $\check{T}_{\alpha}$.

Since $G$ has connected center, the proof of Lemma \ref{AdjointWalls} shows that $\check{T}_{\alpha}$ is the $s$ fixed locus in $\check{T}$. Therefore $R \otimes_{R^s} R \subset R_1 \times R_s$ consists of pairs of functions that agree along $\check{T}_{\alpha}$.  Hence $\gr$ factors through $\Hom(\Xi_s, \Xi_s) \rightarrow R \otimes_{R^s} R$, which is an isomorphism because $\gr$ is injective and $R \otimes_{R^s} R$ is a cyclic $R$-bimodule generated by $1 \otimes 1$.
\end{proof}

\section{Localizing universal sheaves}
Here we describe the universal Hecke category after localizing away from all but one wall.
If $\beta$ is a coroot, let $R^{(\beta)}$ be the ring of functions on $\check{T}^{(\beta)} \coloneqq \check{T} - \bigcup_{\beta' \neq \beta} \check{T}_{\beta'}$, obtained by localizing away from all walls except $\check{T}_{\beta}$. 
For $K \in \DShv_{(B)}(Y)$, write $K^{(\beta)} \coloneqq K \otimes_R R^{(\beta)}$ for its localization with respect to the right $R$-action.

\subsection{Cleanness and blocks} 
Roughly speaking, after the localizing away from all walls except $\check{T}_{\alpha}$, all extensions are clean in the base directions of $Y \rightarrow G/P_s$, the only nontrivial extensions are in the fiber directions.

Let $\beta$ be a coroot, and $t \in W$ be the corresponding reflection.

\begin{proposition}\label{Key}
If $\ell(w) < \ell(wt)$ then  $\Delta_w^{(\beta)} \simeq \nabla_w^{(\beta)}$, i.e. the extension is clean.
\end{proposition}
\begin{proof}
Induct on the length $\ell(w)$. Choose a simple reflection $s$ satisfying $ws < w$. Necessarily $s \neq t$ so localizing \eqref{DeltaToNabla} kills the cokernel, and $\Delta_s^{(\beta)} \simeq \nabla_s^{(\beta)}$ becomes an isomorphism. Moreover \[\ell(ws) = \ell(w) -1 < \ell(wt) - 1 \leq \ell(wts) = \ell((ws)(sts)).\]
By the inductive hypothesis $\Delta_{ws}^{(s\beta)} \simeq \nabla_{ws}^{(s\beta)}$ and therefore \[\Delta_w^{(\beta)} \simeq \Delta_{ws}^{(s\beta)}* \Delta_s^{(\beta)} \simeq \nabla_{ws}^{(s\beta)} * \nabla_s^{(\beta)} \simeq \nabla_w^{(\beta)}.\qedhere\] 
\end{proof}

The following argument is similar to 4.11 of \cite{LY}.

\begin{proposition}\label{LocalizedExtensions}
If $w \neq v, vt$ then $\Hom(\Delta_w^{(\beta)},  \Delta_v^{(\beta)}) = 0$.
\end{proposition}
\begin{proof}
Assume that $w \neq v, vt$, and induct on the length $\ell(w)$.  Choose a simple reflection $s$ satisfying $ws < w$. 

\begin{enumerate}
\item[-] If $s \neq t$, then $\Delta_s^{(\beta)} \simeq \nabla_s^{(\beta)}$ is clean. Either $vs < v$ or $v < vs$ but, since $\Delta_s^{(\beta)} \simeq \nabla_s^{(\beta)}$ is clean, in both cases $\Delta_{vs}^{(s\beta)} * \Delta_s^{(\beta)} \simeq \Delta_v^{(\beta)}$. Therefore by induction \[\Hom(\Delta_w^{(\beta)},  \Delta_v^{(\beta)}) \simeq \Hom(\Delta_{ws}^{(s\beta)} * \Delta_s^{(\beta)}, \Delta_{vs}^{(s\beta)} * \Delta_s^{(\beta)}) \simeq \Hom(\Delta_{ws}^{(s\beta)}, \Delta_{vs}^{(s\beta)}) \simeq 0.\]

\item[-] If $s = t$ and $vt < v$, then by induction \[\Hom(\Delta_w^{(\beta)}, \Delta_v^{(\beta)}) \simeq \Hom(\Delta_{wt}^{(\beta)} * \Delta_t^{(\beta)}, \Delta_{vt}^{(\beta)} * \Delta_t^{(\beta)}) \simeq \Hom(\Delta_{wt}^{(\beta)}, \Delta_{vt}^{(\beta)}) \simeq 0.\]

\item[-] If $s = t$ and $v < vt$, then by induction $\Hom(\Delta_{wt}^{(\beta)}/(e^{\beta} - 1), \Delta_v^{(\beta)}) \simeq 0$ and \[\Hom(\Delta_{wt}^{(\beta)} * \nabla_t^{(\beta)}, \Delta_v^{(\beta)}) \simeq \Hom(\Delta_{wt}^{(\beta)}, \Delta_v^{(\beta)} * \Delta_t^{(\beta)}) \simeq \Hom(\Delta_{wt}^{(\beta)}, \Delta_{vt}^{(\beta)}) \simeq 0.\] Convolving $\Delta_{wt}$ by \eqref{DeltaToNabla} gives a triangle \[\Delta_w^{(\beta)} \rightarrow \Delta_{wt}^{(\beta)} * \nabla_t^{(\beta)} \rightarrow \Delta_{wt}^{(\beta)}/(e^{\beta} - 1),\] and hence $\Hom(\Delta_w^{(\beta)}, \Delta_v^{(\beta)}) \simeq 0$. \qedhere
\end{enumerate}
\end{proof}

\subsection{Localizing the big tilting}
After localizing away from all walls except $\check{T}_{\alpha}$, only standards indexed by the same $s$ coset admit nontrivial extensions, hence the following splitting.

\begin{proposition}\label{TiltingSplitting}
The localized big tilting sheaf splits as a direct sum \[\Xi^{(\alpha)} \simeq \bigoplus_{W/\langle s \rangle}\Delta_w^{(\alpha)} * \Xi^{(\alpha)}_s\] indexed by $w \in W$ satisfying $w < ws$.
\end{proposition}
\begin{proof}
Proposition \ref{LocalizedExtensions} gives a splitting $\Xi^{(\alpha)} \simeq \bigoplus_{W/\langle s \rangle} K_w^{(\alpha)}$, where $K_w^{(\alpha)}$ lies in the subcategory generated under extensions and shifts by $\Delta_w^{(\alpha)}$ and $\Delta_{ws}^{(\alpha)}$.
By Proposition \ref{XiCoStandard} and Equation \eqref{StandardtoCostandard}, each summand admits a standard and costandard filtration \[0 \rightarrow \Delta_{ws}^{(\alpha)} \rightarrow K_w^{(\alpha)} \rightarrow \Delta_w^{(\alpha)} \rightarrow 0 \quad \text{and} \quad 0 \rightarrow \nabla_w^{(\alpha)} \rightarrow K_w^{(\alpha)} \rightarrow \nabla_{ws}^{(\alpha)} \rightarrow 0.\]
Proposition \ref{Key} says $\nabla_{w^{-1}}^{(w\alpha)} \simeq \Delta_{w^{-1}}^{(w\alpha)}$ is clean, so $\nabla_{w^{-1}}^{(w\alpha)} * K_w^{(\alpha)}$ admits standard and costandard filtrations as in \eqref{FilUnique}. This implies $K_w^{(\alpha)} \simeq \Delta_w^{(\alpha)}* \Xi_s^{(\alpha)}$ by Lemma \ref{XisUnique}.
 \end{proof}
 
 \subsection{Example}
Let $G = \PGL(3)$ with simple coroots $\alpha_1$ and $\alpha_2$. The non-simple positive coroot is $\beta \coloneqq \alpha_1 + \alpha_2$. Away from the walls $\check{T}_{\alpha_2}$ and $\check{T}_{\beta}$, the localized big tilting sheaf splits \[\Xi^{(\alpha_1)} \simeq (\Delta_1^{(\alpha_1)} \oplus \Delta_{s_2}^{(\alpha_1)} \oplus \Delta_{s_1s_2}^{(\alpha_1)}) * \Xi^{(\alpha_1)}_{s_1}.\]
The extension $\Delta_{s_1s_2}^{(\alpha_1)} \simeq \Delta_{s_1}^{(\beta)} * \Delta_{s_2}^{(\alpha_1)} \simeq \nabla_{s_1}^{(\beta)} * \nabla_{s_2}^{(\alpha_1)} \simeq \nabla_{s_1s_2}^{(\alpha_1)}$ is clean.

 \section{Localizing R-bimodules}
Here we separate the union of graphs into pairs by localizing away from all but one wall.
Let $\beta$ be a coroot, and $t \in W$ be the corresponding reflection. If $M \in \DBim(R)$, write $M^{(\beta)} \coloneqq M \otimes_R R^{(\beta)}$ for its right localization away from all walls except $\check{T}_{\beta}$.


\subsection{Uniformizing the dual torus}
The following lemma uses the assumption that $G$ has connected center. It is clear for classical groups because then the Weyl group permutes the entries of a diagonal torus. Below is a uniform proof using the affine Weyl group.

\begin{lemma}\label{AdjointWalls}
Assume $k = \bQ$ or $\bF_p$. If $\zeta \in \check{T}^{(\beta)}(\overline{k})$ is fixed by $w \in W$, then $\zeta \in \check{T}_{\beta}(\overline{k})$ is on the wall, and $w = t$ is corresponding reflection.
\end{lemma}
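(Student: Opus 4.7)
The plan is to prove the general fact that for any $\check{h} \in \check{T}(\overline{k})$, the stabilizer $\mathrm{Stab}_W(\check{h})$ is generated by the reflections $t_\gamma$ in those coroots $\gamma$ satisfying $\gamma(\check{h}) = 1$, equivalently $\check{h} \in \check{T}_\gamma$. Granting this, the hypothesis $\check{h} \in \check{T}^{(\beta)}(\overline{k})$ forces $\mathrm{Stab}_W(\check{h}) \subseteq \langle t_\beta \rangle$, so a nontrivial $w$ fixing $\check{h}$ must equal $t_\beta = t$ and must have $\beta(\check{h}) = 1$, i.e., $\check{h} \in \check{T}_\beta$.

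The strategy for the general fact is to lift $\check{h}$ to a ``universal cover'' where $W$ acts linearly and then invoke the classical theorem that the stabilizer of a point in an affine Coxeter group is generated by the reflections in affine walls through that point. The adjoint hypothesis enters crucially here: $G$ adjoint makes $\check{G}$ simply connected, so $X_*(\check{T})$ coincides with the coroot lattice $Q^\vee$ of $\check{G}$, and $W_{\mathrm{aff}} := W \ltimes Q^\vee$ is the genuine affine Weyl group of $\check{G}$, generated by the affine reflections $s_{\gamma,n}$ in the hyperplanes $H_{\gamma,n} = \{\xi \in V : \gamma(\xi) = n\}$ of $V := Q^\vee \otimes_\bZ \bQ$. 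For non-adjoint $G$ the cocharacter lattice would be strictly larger than $Q^\vee$ and $W \ltimes X_*(\check{T})$ would be an extended affine Weyl group containing diagram-automorphism cosets not produced by affine reflections, so the stabilizer theorem would not apply.

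For the lift itself, when $k = \bF_p$ an abstract isomorphism $\overline{\bF}_p^\times \cong \bZ_{(p)}/\bZ$ (the prime-to-$p$ part of $\bQ/\bZ$) identifies $\check{T}(\overline{\bF}_p) = Q^\vee \otimes \overline{\bF}_p^\times$ with $(Q^\vee \otimes \bZ_{(p)})/Q^\vee \subset V/Q^\vee$, so every $\check{h}$ lifts to some $\xi \in V$. When $k = \bQ$, split $\overline{\bQ}^\times = \mu_\infty \oplus V_f$ into its torsion and torsion-free parts; lift the torsion component of $\check{h}$ as above, and describe the $W$-stabilizer of the torsion-free component in $Q^\vee \otimes V_f$ by the analogous classical theorem for the rational reflection representation of $W$. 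A $w$ fixing $\check{h}$ then yields $w\xi - \xi \in Q^\vee$, so $\xi$ is a fixed point of some $w\,\tau_\mu \in W_{\mathrm{aff}}$; the point-stabilizer theorem writes $w\tau_\mu$ as a product of reflections $s_{\gamma,n}$ with $\gamma(\xi) = n \in \bZ$, and projecting the linear part to $W$ yields $w \in \langle t_\gamma : \gamma(\check{h}) = 1 \rangle$.

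The main obstacle I expect is the positive-characteristic lifting step---one works with $\bZ_{(p)}$-coefficients (denominators coprime to $p$) rather than $\bQ$-coefficients, and one must verify that the classical affine Coxeter stabilizer theorem, usually stated over $\bR$, applies to these more restricted fractional lifts. Fortunately the adjoint hypothesis rescues the argument structurally by making $W_{\mathrm{aff}}$ genuinely reflection-generated, and the combinatorial stabilizer argument transfers verbatim.
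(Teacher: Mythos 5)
Your core strategy matches the paper's: lift $\check{h}$ to a vector space on which $W^{\aff} = W \ltimes \check{\Lambda}$ acts, invoke the alcove stabilizer theorem, and use the adjoint hypothesis to ensure $W^{\aff}$ is the genuine (non-extended) affine Weyl group and hence reflection-generated. Where you diverge is in the lifting. The paper chooses a single group embedding $\overline{k}^{\times} \hookrightarrow \bC^{\times}$ and lifts once along the exponential $\check{\Lambda} \otimes \bC \to \check{T}(\bC)$, which is uniform in $k$ and places everything in the real fundamental alcove. You bifurcate by characteristic. The step you flag as risky, the $\bZ_{(p)}$ lift for $k = \bF_p$, is actually harmless since $\bZ_{(p)} \subset \bR$, so the real alcove theorem applies verbatim. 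The step that genuinely needs care is your $k = \bQ$ route: after decomposing $\check{h}$ into torsion and torsion-free components, you get $w \in \mathrm{Stab}_W(\check{h}_{\mathrm{tor}}) \cap \mathrm{Stab}_W(\check{h}_{\mathrm{free}})$ with each factor generated by appropriate reflections, but it does not follow formally that this intersection is generated by the reflections $t_\gamma$ with $\gamma(\check{h}) = 1$. To close that gap you must rerun Steinberg's theorem for the stabilizer of $\check{h}_{\mathrm{free}}$ inside the finite reflection group $\mathrm{Stab}_W(\check{h}_{\mathrm{tor}})$, using the Deodhar--Dyer fact that the reflections of a reflection subgroup are reflections of the ambient $W$; and since $\check{\Lambda} \otimes V_f$ is several copies of the reflection representation rather than one, there is a further hidden iteration of the same issue. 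All of this can be carried out, but it is cleaner and avoids the gap entirely to choose an embedding $\overline{\bQ} \hookrightarrow \bC$ and reuse the same lift as in the $\bF_p$ case, which is exactly the paper's uniform move.
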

\begin{proof}
Choose an embedding $\overline{k}^{\times} \hookrightarrow \bC^{\times}$ and identify $\check{T}(\overline{k}) = \check{\Lambda} \otimes \overline{k}^{\times}$ with its image in the complex torus $\check{\Lambda} \otimes \bC^{\times}$. 
It suffices to replace $\zeta$ by $v \zeta$ and replace $\beta$ by $v\beta$ where $v \in W$. Therefore we may assume that $\zeta = e^{2 \pi i \check{X}}$ is the exponential of $\check{X} \in \check{\Lambda} \otimes \bC$ in the fundamental alcove. 
The fundamental alcove is bounded by the fixed loci of finite and affine simple reflections. 

\begin{enumerate}
\item[-] If $\check{X}$ is fixed by a finite simple reflection $s$, then also $s\zeta = \zeta$ is fixed. 
\item[-] If $\check{X}$ is fixed by the affine simple reflection $t_0 e^{-\check{\beta}_0} \in W^{\aff}$, then $\langle \beta_0, \check{X} \rangle = 1$ so $t_0 \zeta = \zeta$.
\end{enumerate}
Here $t_0 \in W$ is the reflection corresponding to the longest coroot $\beta_0$.

Let $\check{\Lambda}^{\rt} \subset \check{\Lambda}$ be the span of the roots. Since $G$ has connected center, $(\check{\Lambda}^{\rt} \otimes \bQ) \cap \check{\Lambda} = \check{\Lambda}^{\rt}$. The assumption $w\zeta = \zeta$ implies that $w\check{X} - \check{X} \in (\check{\Lambda}^{\rt} \otimes \bQ) \cap \check{\Lambda} = \check{\Lambda}^{\rt}$. Therefore $\check{X}$ is fixed by some affine Weyl group element $w e^{\check{\lambda}} \in W^{\aff} =  W \ltimes \check{\Lambda}^{\rt}$. Hence $\check{X}$ lies on the boundary of the fundamental alcove. Since $\zeta \in \check{T}$ lies on at most 1 wall, $\check{X}$ is fixed by exactly 1 nontrivial affine Weyl group element. Therefore $w = t$ is a reflection and $\zeta \in \check{T}_{\beta}$ is in the kernel of $\beta$.
\end{proof}

\subsection{Union of graphs}
Let $\Gamma_w^{(\beta)} \coloneqq \Spec R_w^{(\beta)}$ be the right localization of the graph of $w$.

\begin{proposition}\label{CoherentSplitting}
After right localizing away from all walls except $\check{T}_{\beta}$, \[R \otimes_{R^W} R^{(\beta)} \simeq \prod_{W/\langle t \rangle} R_w \otimes_{R^t} R^{(\beta)}.\]
\end{proposition}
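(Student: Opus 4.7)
The plan is to reinterpret the statement geometrically. Writing $\Spec$ of both sides, we must show that the finite $\check{T}^{(\beta)}$-scheme $\check{T} \times_{\check{T}/W} \check{T}^{(\beta)}$ decomposes as a disjoint union of pieces $\Spec(R_w \otimes_{R^t} R^{(\beta)})$ indexed by the cosets $w \in W/\langle t \rangle$. The two main ingredients will be the stabilizer description of Lemma \ref{AdjointWalls} and étale descent along the intermediate quotient $\check{T} \to \check{T}/\langle t \rangle \to \check{T}/W$.

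First I would use Lemma \ref{AdjointWalls} to control stabilizers: every geometric point of $\check{T}^{(\beta)}$ is fixed by at most the subgroup $\langle t \rangle \subset W$. The key technical step is then to deduce that $\check{T}/\langle t \rangle \to \check{T}/W$ is étale over the image of $\check{T}^{(\beta)}$, since all ramification of $\check{T} \to \check{T}/W$ over this locus is concentrated in the first step $\check{T} \to \check{T}/\langle t \rangle$. One can verify this by the criterion that a finite map of smooth varieties is étale at a point precisely when the scheme-theoretic fiber is reduced of the generic degree; the only degenerations of fibers of $\check{T}/\langle t \rangle \to \check{T}/W$ over the image of $\check{T}^{(\beta)}$ would arise from a nontrivial element of $W \setminus \langle t \rangle$ fixing a geometric point, which is ruled out by the lemma.

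Having this étaleness, base changing along $\check{T}^{(\beta)} \to \check{T}/W$ produces an étale cover $\check{T}/\langle t \rangle \times_{\check{T}/W} \check{T}^{(\beta)} \to \check{T}^{(\beta)}$ of degree $|W/\langle t \rangle|$. The residual left $W$-action permutes the geometric sheets transitively with stabilizer $\langle t \rangle$, so the cover trivializes as $\coprod_{W/\langle t \rangle} \check{T}^{(\beta)}$, the $w$-th sheet being the image of the map $\check{T}^{(\beta)} \xrightarrow{w} \check{T} \twoheadrightarrow \check{T}/\langle t \rangle$. Pulling back along $\check{T} \to \check{T}/\langle t \rangle$ then yields
\[\check{T} \times_{\check{T}/W} \check{T}^{(\beta)} = \coprod_{W/\langle t \rangle} \check{T} \times_{\check{T}/\langle t \rangle,\, w} \check{T}^{(\beta)},\]
whose $w$-th factor has coordinate ring $R \otimes_{R^t} R^{(\beta)}$ with the left $R$-action twisted by $w$, i.e.\ exactly $R_w \otimes_{R^t} R^{(\beta)}$.

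The hard part will be the étaleness assertion for $\check{T}/\langle t \rangle \to \check{T}/W$ over the image of $\check{T}^{(\beta)}$, both because it rests on Lemma \ref{AdjointWalls} (which is stated only for $k = \bQ$ or $\bF_p$, so arbitrary $k$ may require a flat base-change argument from an integral model) and because quotients by reflection subgroups can be subtle in bad characteristic. Once that is secured, the remaining steps are routine descent.
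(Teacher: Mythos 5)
Your geometric strategy and the final decomposition have the right shape, but the étaleness claim at its heart is false. Lemma~\ref{AdjointWalls} controls the stabilizer of a geometric point \emph{of} $\check{T}^{(\beta)}$, whereas the fibers of $\check{T}/\langle t\rangle \to \check{T}/W$ over the image of $\check{T}^{(\beta)}$ are built out of the whole $W$-orbit $W\check{h}$, most of whose members land on the \emph{other} walls you localized away and hence are not in $\check{T}^{(\beta)}$. Concretely, take $G$ adjoint of type $A_2$, $W = S_3$, $\beta = \alpha_1$, $t = s_1$, and $\check{h} \in \check{T}_{\alpha_1}$ a geometric point of $\check{T}^{(\alpha_1)}$ fixed exactly by $\langle s_1 \rangle$. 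Then $s_2\check{h}$ lies on the wall $\check{T}_{\alpha_1+\alpha_2}$, is fixed by the reflection $s_2 s_1 s_2 \notin \langle s_1 \rangle$, and has trivial stabilizer in $\langle s_1\rangle$; so $\check{T} \to \check{T}/W$ is ramified there while $\check{T} \to \check{T}/\langle t \rangle$ is not, and therefore $\check{T}/\langle t\rangle \to \check{T}/W$ has ramification index $2$ at the image of $s_2\check{h}$. The fiber over $\bar q$ has only $2$ geometric points against degree $3$, so the base change $\check{T}/\langle t\rangle \times_{\check{T}/W}\check{T}^{(\beta)} \to \check{T}^{(\beta)}$ is genuinely non-\'etale. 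More generally, over a semiregular point of $\check{T}_\beta \cap \check{T}^{(\beta)}$ the reduced fiber of $\check{T}/\langle t\rangle \to \check{T}/W$ has $(|W| + |C_W(t)|)/4$ geometric points, which equals the generic degree $|W|/2$ only when $t$ is central in $W$.

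The statement does not in fact need this intermediate map to be \'etale, and the paper avoids it entirely. After reducing to $k = \bQ$ or $\bF_p$, Lemma~\ref{Reduced} shows $\check{T}\times_{\check{T}//W}\check{T}$ is reduced, so it literally equals $\bigcup_w \Gamma_w$ with the reduced induced structure; then Lemma~\ref{AdjointWalls}, applied only to points of $\check{T}^{(\beta)}$, shows that $\Gamma_w^{(\beta)}$ and $\Gamma_{w'}^{(\beta)}$ can meet only when $w' \in \{w, wt\}$, so the sets $\Gamma_w^{(\beta)} \cup \Gamma_{wt}^{(\beta)}$ are pairwise disjoint closed subschemes, hence connected components, and the product decomposition (with the same reducedness applied to $\langle t \rangle$ in place of $W$) drops out directly. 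No descent through $\check{T}/\langle t\rangle$ is needed. If you want to keep the descent flavor, you would have to replace \'etaleness by a weaker statement about the behavior of the ramified component and then still argue reducedness separately, at which point you have essentially reconstructed the paper's argument.
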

\begin{proof}
It suffices to work over $k = \bQ$ or $\bF_p$. The closed subschemes $\Gamma_w^{(\beta)} \cup \Gamma_{wt}^{(\beta)} \subset \bigcup \Gamma_w^{(\beta)}$ are disjoint by Lemma \ref{AdjointWalls}, so they are separate connected components.
Lemma \ref{Reduced} implies
\[\Spec(R \otimes_{R^W} R^{(\beta)}) \simeq \bigcup_W \Gamma_w^{(\beta)} \simeq \bigsqcup_{W/\langle t \rangle} \Gamma_{w}^{(\beta)} \cup \Gamma_{wt}^{(\beta)} \simeq \bigsqcup_{W/ \langle t \rangle} \Spec(R_w \otimes_{R^t} R^{(\beta)}).\qedhere\]
\end{proof}

The following reducedness lemma allowed us to argue geometrically.

\begin{lemma}\label{Reduced}
The fiber product $\check{T} \times_{\check{T}/\!/W} \check{T} = \bigcup \Gamma_w$ is the union of graphs of Weyl group elements with the reduced induced scheme structure.
\end{lemma}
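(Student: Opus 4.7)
The plan is to identify both sides as closed subschemes of $\check{T} \times \check{T}$ with the same underlying topological space and then show the fiber product $\Spec(R \otimes_{R^W} R)$ is reduced; together these force it to equal the reduced induced scheme structure on $\bigcup_w \Gamma_w$.

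First I would match underlying sets. Since $\check{T}//W = \Spec R^W$ is the categorical quotient of $\check{T}$ by the finite group $W$, two geometric points of $\check{T}$ lie over the same point of $\check{T}//W$ iff they are in the same $W$-orbit. Consequently the closed points of $\Spec(R \otimes_{R^W} R)$ are precisely the pairs $(t, wt)$ for $t \in \check{T}(\overline{k})$ and $w \in W$, so the underlying topological space is $\bigcup_w \Gamma_w$.

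For reducedness I would invoke the theorem of Pittie--Steinberg: because $G$ is adjoint, $\check{G}$ is simply connected, so the group algebra $R = k[\Lambda] = k[X^*(\check{T})]$ is free of rank $|W|$ over $R^W$ over any field $k$. Consequently $R \otimes_{R^W} R$ is free of rank $|W|$ over $R$ via the right tensor factor. Now consider the $R$-linear comparison map
$$\phi : R \otimes_{R^W} R \longrightarrow \prod_{w \in W} R_w, \qquad a \otimes b \longmapsto \bigl(a \cdot w(b)\bigr)_{w \in W},$$
which is well defined because $w$ fixes $R^W$ and whose target is also free of rank $|W|$ over $R$. After inverting $\prod_{\beta}(1 - e^{\beta})$ over all coroots $\beta$, the Weyl group acts freely on $\check{T}$, the cover $\check{T} \to \check{T}//W$ becomes finite étale Galois, and $\phi$ is the classical Galois descent isomorphism. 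Thus $\phi$ is a map between free $R$-modules of the same rank that becomes an isomorphism after localizing at a nonzerodivisor of the domain $R$, so $\phi$ is injective. The target is a product of integral domains, hence reduced, and injectivity forces the source to be reduced as well.

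Combining, $\Spec(R \otimes_{R^W} R)$ is a reduced scheme with underlying space $\bigcup_w \Gamma_w$, so it coincides with the reduced induced subscheme structure on this union. The main obstacle is justifying the freeness of $R$ over $R^W$: this is precisely where the adjoint hypothesis on $G$ (equivalently, $\check{G}$ simply connected) enters. Without it $R$ need not even be flat over $R^W$, and the fiber product could acquire embedded components or nilpotents along the higher-codimension wall intersections. An alternate route to reducedness, avoiding the explicit $\phi$, is to combine miracle flatness (using Chevalley's theorem that $\check{T}//W$ is smooth in the simply connected case) with Serre's $R_0 + S_1$ criterion, since generic étaleness over the regular locus of $\check{T}//W$ gives $R_0$.
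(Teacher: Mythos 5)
Your proposal is correct and takes essentially the same approach as the paper: both match underlying $\overline{k}$-points, then invoke Pittie--Steinberg freeness of $R\otimes_{R^W}R$ over the right copy of $R$ to embed it into a product of localizations of the $R_w$, whose reducedness pulls back. The paper simply localizes all the way to $R\otimes_{R^W}\Frac(R)=\prod_W\Frac(R)_w$ rather than defining the explicit map $\phi$ to $\prod_w R_w$, but the mechanism is identical.
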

\begin{proof}
Both $\check{T} \times_{\check{T}/\!/W} \check{T}$ and $\bigcup \Gamma_w$ are closed subschemes of $\check{T} \times \check{T}$ with the same $\overline{k}$-points. Therefore it suffices to show that $\check{T} \times_{\check{T}/\!/W} \check{T}$ is reduced.
Indeed $R \otimes_{R^W} R$ is free as a right $R$-module, so it injects into its right localization \[R \otimes_{R^W} R \hookrightarrow R \otimes_{R^W}  \Frac(R) \simeq \prod_W  \Frac(R)_w.\qedhere\]\end{proof}

\section{Uncompleting Soergel's Endomorphismensatz}
Here we calculate endomorphisms of the big tilting sheaf. After localizing away from all but one wall, this reduces to a calculation in semisimple rank 1.

\subsection{The bimonodromy map}
Here we construct the map appearing in Theorem \ref{main} by repeating the proof of Proposition 6.4 of \cite{BR}.
\begin{proposition}
Bimonodromy factors through map of free right $R$-modules \beq \label{Bimon} R \otimes_{R^W} R \rightarrow \Hom(\Xi, \Xi).\eeq
\end{proposition}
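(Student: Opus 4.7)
The category $\DShv_{(B)}(Y)$ is $R$-bilinear, so $\Xi$ carries two commuting ring maps $\lambda, \rho \colon R \to \End(\Xi)$ recording left and right $T$-monodromy. Together they define a homomorphism $R \otimes_k R \to \End(\Xi)$, and the plan is to show that $\lambda(f) = \rho(f)$ for every $f \in R^W$; this is exactly the condition required for the map to factor through $R \otimes_{R^W} R$.

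The first step is to check this identity on each graded piece of the standard filtration of $\Xi$ supplied by proposition \ref{XiCoStandard}. Each graded piece is a $\Delta_w$, and its bimonodromy realizes the bimodule $R_w = k[\Gamma_w]$ in which the left and right actions of $R$ differ precisely by $w$, reflecting the fact that on $Y_w$ the two $T$-actions differ by $w$. For $f \in R^W$ one has $w(f) = f$, so the two actions of $f$ coincide on every graded piece.

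The second step is to lift this agreement from the graded pieces back to $\Xi$ itself. Combining the standard and costandard filtrations with the orthogonality \eqref{StandardtoCostandard}, $\End(\Xi)$ is concentrated in cohomological degree $0$, and peeling off the two filtrations one piece at a time produces an injection
\[
\End(\Xi) \;\hookrightarrow\; \bigoplus_{w \in W} \Hom(\Delta_w, \nabla_w) \;=\; \bigoplus_{w \in W} R_w,
\]
where the long exact sequences at each step are controlled by \eqref{StandardtoCostandard}. Under this injection, $\lambda(f) - \rho(f)$ maps to zero for $f \in R^W$ by the first step, so $\lambda(f) = \rho(f)$ in $\End(\Xi)$.

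This produces the desired map $R \otimes_{R^W} R \to \End(\Xi)$, right $R$-linear by construction. Both sides are free as right $R$-modules: the left by Steinberg's theorem that $R = k[\Lambda]$ is free over $R^W$ when $G$ is adjoint, and the right by the standard or costandard filtration together with lemma \ref{VStandard}, which shows each $\Hom(\Xi, \nabla_w) = R_w$ is free of rank one. The main obstacle I anticipate is the injectivity in the displayed formula; once that careful filtration argument is in place, the factorization is essentially formal.
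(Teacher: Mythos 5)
Your strategy---show $\lambda(f)$ and $\rho(f)$ agree for $f\in R^W$ by looking at the graded pieces of a filtration, and establish freeness on both sides---is the same as the paper's, and your freeness argument is exactly the paper's (Pittie--Steinberg on the left; proposition \ref{XiCoStandard} and \eqref{StandardtoCostandard} on the right). But there is a gap in how you obtain and then use the injection $\End(\Xi)\hookrightarrow \bigoplus_w R_w$.

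You derive the injection from the orthogonality \eqref{StandardtoCostandard} alone, by peeling off the standard filtration on the source and the costandard filtration on the target. What this actually produces is a finite filtration of $\End(\Xi)$ whose subquotients are $\Hom(\Delta_w,\Xi)\cong R_w$; it does not by itself identify the $w$-component of an endomorphism with its induced action on the graded piece $\Delta_w$ of $\Xi$. Yet your key step is exactly that identification: you argue $\lambda(f)-\rho(f)$ acts by zero on each $\Delta_w$, and then conclude it maps to zero in each $R_w$ of the filtration. In general, an endomorphism that vanishes on every graded piece of a filtered object need not vanish: over $R=k[x]$, multiplication by $x$ on $R/x^2$ is zero on both graded pieces of $0\subset xR/x^2\subset R/x^2$ but is nonzero. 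To rule out such ``off-diagonal'' behavior for $\End(\Xi)$ one needs more than \eqref{StandardtoCostandard}.

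The paper supplies the missing ingredient: equation \eqref{Hom0}, namely $\Hom^0(\Delta_w,\Delta_v)=0$ for $w\neq v$ (cited from lemma 6.2 of \cite{BR}, where it is deduced from incompatibility of the bimonodromy actions). This vanishing is special to the universal monodromic setting---as the paper remarks, it fails nonmonodromically for $w<v$---and it is precisely what makes the $\gr$ map $\End(\Xi)\hookrightarrow\prod_w R_w$ both well defined (endomorphisms preserve the standard filtration with no off-diagonal components between distinct $\Delta_w$) and injective. Your proof should invoke \eqref{Hom0}, or else prove injectivity of the intended $\gr$ map by some other means; as written, the step ``under this injection $\lambda(f)-\rho(f)$ maps to zero'' is not justified.
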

\begin{proof}
Associated graded for the standard filtration induces an injection $\gr: \Hom(\Xi, \Xi) \hookrightarrow \prod R_w$ by Proposition \ref{Hom0}, as in Corollary 6.3 of \cite{BR}.
Therefore bimonodromy factors through the quotient $R \otimes R \rightarrow R \otimes_{R^W} R$.

The Pittie-Steinberg theorem \cite{St} implies $R \otimes_{R^W} R$ is free as a right $R$-module. Proposition \ref{XiCoStandard} and Equation \eqref{StandardtoCostandard} imply $\Hom(\Xi, \Xi)$ is a free right $R$-module concentrated in degree 0. 
\end{proof}

\subsection{Proof of Theorem \ref{main}}
If the determinant of \eqref{Bimon} was not invertible, it would vanish on a codimension 1 subvariety by Hartogs' lemma, see Theorem 38 of \cite{Mat}. The following lemma implies that \eqref{Bimon} is an isomorphism after localizing away from all but any one wall. The complement $\check{T} - \bigcup \check{T}^{(\beta)}$ is the locus where multiple walls intersect, which has codimension 2. Therefore Hartogs' lemma implies Soergel's Endomorphismensatz $R \otimes_{R^W}R \simeq \Hom(\Xi, \Xi)$.

\begin{lemma}\label{LocEndProp}
After localizing away from all walls except $\check{T}_{\beta}$, bimonodromy \eqref{Bimon} induces an isomorphism \beq \label{LocEnd} R \otimes_{R^W} R^{(\beta)} \simeq \Hom(\Xi, \Xi)^{(\beta)}.\eeq
\end{lemma}
\begin{proof}
First suppose $\beta = \alpha$ is a simple coroot. 
Propositions \ref{SimpleEnd} and \ref{VStandard}\eqref{VStandard3} imply that \[R_w \otimes_{R^s} R^{(\alpha)} \simeq R_w \otimes_R \bV(\Xi_s)^{(\alpha)} \overset{\eqref{IntertwiningIsomorphism}}{\simeq} \Hom(\Delta_w * \Xi, \Delta_w * \Xi_s)^{(\alpha)} \simeq \bV(\Delta_w * \Xi_s)^{(\alpha)} \overset{\eqref{HomTensor}}{\simeq} \bV(\Delta_w^{(\alpha)} * \Xi^{(\alpha)}_s).\] Propositions \ref{CoherentSplitting} and \ref{TiltingSplitting} imply that both sides of \eqref{LocEnd} split as a direct sum indexed by minimal length $s$ coset representatives. Therefore \[R \otimes_{R^W} R^{(\alpha)} \simeq \prod_{W/\langle s \rangle} R_w \otimes_{R^s} R^{(\alpha)} \simeq \bigoplus_{W/\langle s \rangle} \bV(\Delta_w^{(\alpha)} * \Xi^{(\alpha)}_s) \simeq \bV(\Xi^{(\alpha)}) \simeq \Hom(\Xi, \Xi)^{(\alpha)}.\] 

For an arbitrary coroot, write $\beta = w\alpha$ for $\alpha$ simple. Proposition \ref{VStandard} implies $\Xi^{(\beta)} \simeq \Xi^{(\alpha)} * \Delta_w^{(\beta)}$. So the simple coroot case implies $R \otimes_{R^W} R^{(\beta)} \simeq \Hom(\Xi, \Xi)^{(\beta)}$.
\end{proof}

\subsection{Recompleting Soergel's Endomorphismensatz}
The pro-unipotent Endomorphismensatz is proved in Proposition 4.7.3 of \cite{BY}, and extended to modular coeffients in \cite{BR}. Below is a short proof by formally completing Theorem \ref{main}.

Let $I \subset R$ be the ideal of functions vanishing at the identity in $\check{T}$. Consider the pro-unipotent tilting sheaf $\Xi^{\wedge} \coloneqq \varprojlim \Xi/I^n$ in the completed category of \cite{BY}.

\begin{corollary}\label{Complete} 
In the completed category $\Hom^0(\Xi^{\wedge}, \Xi^{\wedge}) \simeq R \otimes_{R^W} R^{\wedge}$.
\end{corollary}
\begin{proof}
If $m \geq n$ then $\Hom^0(\Delta_w/I^m, \nabla_v/I^n) \rightarrow \Hom^0(\Delta_w, \nabla_v/I^n)$ is an isomorphism in degree 0.
Therefore by the standard and costandard filtrations \[\Hom^0(\Xi/I^m, \nabla/I^n) \xrightarrow{\sim} \Hom^0(\Xi, \nabla/I^n) \quad \text{and hence} \quad \Hom^0(\Xi/I^m, \Xi/I^n) \xrightarrow{\sim} \Hom^0(\Xi, \Xi/I^n).\] 
Similarly $\Hom(\Xi, \Xi/I^n) \simeq \Hom(\Xi, \Xi)/I^n$. Theorem \ref{main} implies that in the completed category \begin{align*}\Hom^0(\Xi^{\wedge}, \Xi^{\wedge}) &\coloneqq \varprojlim_n \varinjlim_m \Hom^0(\Xi/I^m, \Xi/I^n) \\ &\simeq \varprojlim_n \Hom^0(\Xi, \Xi/I^n) \\ &\simeq \varprojlim_n R \otimes_{R^W} R/I^n \\ &\simeq R \otimes_{R^W} R^{\wedge}. \qedhere\end{align*}
\end{proof}

\section{Soergel bimodules and tilting sheaves}
Here we recall the additive category of multiplicative Soergel bimodules \cite{Eb}, and prove that its $\Hom$ spaces are free over $R$. Then we describe the universal monodromic Hecke category as the bounded homotopy category of Bott--Samelson tilting sheaves.

\subsection{The Bott--Samelson construction}
Let $\underline{x} = s_1 \dots s_r$ be an expression for $x \in W$ as a product of simple reflections. Define the Bott--Samelson bimodule and tilting sheaf \[B_{\underline{x}} \coloneqq R \otimes_{R^{s_1}} R \dots \otimes_{R^{s_r}} R \; \in \; \Bim(R) \quad \text{and} \quad \Xi_{\underline{x}} \coloneqq \Xi_{s_1}*\dots \Xi_{s_r} \; \in \; \DShv_{(B)}(Y).\] The proof of Proposition 7.8 of \cite{BR} shows that $\Xi_{\underline{x}}$ admits standard and costandard filtrations

Let $\SBim(R)$ be the full additive subcategory of finite sums and summands of $B_{\underline{x}}$.
Let $\Tilt_{(B)}(Y)$ be the full additive subcategory of finite sums and summands of $\Xi_{\underline{x}}$.
Soergel's functor \beq \label{VTilt} \bV \simeq \Hom(\Xi, -): \Tilt_{(B)}(Y) \rightarrow \SBim(R), \qquad \Xi_{\underline{x}} \mapsto B_{\underline{x}},\eeq
sends Bott--Samelson tilting sheaves to Bott--Samelson bimodules by Propositions \ref{Monoidal} and \ref{SimpleEnd}.

\subsection{Freeness}
To later invoke Hartogs' lemma, we need the following freeness.

It is easy to see that $B_{\underline{x}}$ admits a filtration with graded pieces $R_w$. Below we use a geometric argument to show that all graded pieces $R_1$ can be arranged to appear last in the filtration. This is essential so that 0 appears before any $\Ext^1$ terms in \eqref{BSLES}.

\begin{proposition}\label{SBimFree}
For $\underline{x}$ and $\underline{z}$ two expressions, $\Hom^0(B_{\underline{x}}, B_{\underline{z}})$ is free as a right $R$-module.
\end{proposition}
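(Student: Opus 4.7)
The plan is to compute $\Hom^0(B_{\underline{x}}, B_{\underline{z}})$ by induction on a filtration of $B_{\underline{x}}$ whose graded pieces are the rank-one bimodules $R_w$, arranged as the setup demands so that all copies of $R_1$ form a top quotient
\[0 \to F \to B_{\underline{x}} \to R_1^{\oplus n} \to 0,\]
with $F$ filtered by $R_w$ for $w \neq 1$. Applying $\Hom(-, B_{\underline{z}})$ then yields the long exact sequence \eqref{BSLES}, from which I would extract freeness.

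The construction of this filtration splits into two parts. Existence of a filtration with graded pieces indexed by $0/1$-subexpressions of $\underline{x}$ follows by induction on the length of $\underline{x}$ from the short exact sequence $0 \to R_s \to R \otimes_{R^s} R \to R_1 \to 0$. The geometric content is placing the $R_1$ pieces last: this can be read off the Bott-Samelson variety $\BS_{\underline{x}}$, where subexpressions evaluating to $1 \in W$ correspond to points contracted onto the basepoint $B/B \in G/B$, forming a closed subvariety whose complement corresponds on the bimodule side to the sub-filtration by non-identity $R_w$'s.

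Next I would apply $\Hom(-, B_{\underline{z}})$ and exploit a compatible costandard filtration of $B_{\underline{z}}$ with graded pieces $\nabla_v$ satisfying the bimodule analog of \eqref{StandardtoCostandard}. For $w \neq 1$ the higher Ext groups $\Ext^{>0}_{\Bim(R)}(R_w, B_{\underline{z}})$ then vanish, so the long exact sequence for the sub-filtration of $F$ collapses to short exact sequences of right $R$-modules; each splits because the quotients are free, yielding that $\Hom^0(F, B_{\underline{z}})$ is free. For the top step the long exact sequence reads
\[0 \to \Hom^0(R_1^{\oplus n}, B_{\underline{z}}) \to \Hom^0(B_{\underline{x}}, B_{\underline{z}}) \to \Hom^0(F, B_{\underline{z}}) \xrightarrow{\delta} \Ext^1(R_1^{\oplus n}, B_{\underline{z}}) \to \cdots ,\]
and the $R_1$-last arrangement is designed precisely so that $\delta = 0$, whereupon $\Hom^0(B_{\underline{x}}, B_{\underline{z}})$ becomes a split extension of free right $R$-modules and hence free.

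The main obstacle is exactly this last point, the vanishing of $\delta$. Since $\Ext^1_{\Bim(R)}(R_1, R_1)$ is Hochschild cohomology of $R$ and is nonzero in general, the $\Ext^1$ terms cannot be avoided by abstract nonsense, nor by taking the top quotient to be arbitrary graded pieces. The hint ``0 appears before any $\Ext^1$ terms" indicates that $\delta$ vanishes for structural reasons tied to the geometric construction of the filtration — intuitively, the $R_1$-quotient of $B_{\underline{x}}$ comes from a canonical Bott-Samelson projection compatible with the costandard filtration of $B_{\underline{z}}$, and this compatibility forces the Yoneda product defining $\delta$ to be trivial. Verifying this compatibility is the crux of the argument.
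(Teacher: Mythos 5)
Your filtration of $B_{\underline{x}}$ with the $R_1$ pieces on top is the right starting point (the paper obtains the same short exact sequence $0 \to \ker \to B_{\underline{x}} \to R_1^{\oplus n} \to 0$ by applying $\bV$ to the restriction $\Xi_{\underline{x}} \to \Xi_{\underline{x}}|_{Y_1} = \Delta_1^{\oplus n}$), but you are missing the reduction that makes the rest of the argument trivial: the self-adjunction of $R \otimes_{R^s} -$ (Lemma \ref{SelfAdjoint}) lets you transfer every simple-reflection tensor factor from $B_{\underline{z}}$ onto $B_{\underline{x}}$, so it suffices to treat $\underline{z} = 1$, i.e.\ $B_{\underline{z}} = R_1$. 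After that reduction the hint about ``$0$ appearing before any $\Ext^1$ terms'' has nothing to do with a connecting homomorphism. The long exact sequence reads
\[0 \to \Hom^0(R_1^{\oplus n}, R_1) \to \Hom^0(B_{\underline{x}}, R_1) \to \Hom^0(\ker, R_1) \to \Ext^1(R_1^{\oplus n}, R_1) \to \cdots,\]
and the third term is already zero: $\Hom^0(R_w, R_1) = 0$ for $w \neq 1$ because the two $R$-actions are incompatible, and this propagates up the filtration of $\ker$ by left-exactness of $\Hom$. Hence $\Hom^0(B_{\underline{x}}, R_1) \cong R^{\oplus n}$ is free outright, and there is no $\delta$ to control.

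The step you flag as the crux and cannot prove is therefore a genuine gap, and it is compounded by a second one: your assertion that $\Ext^{>0}_{\Bim(R)}(R_w, B_{\underline{z}})$ vanishes for $w \neq 1$ is not true in general. For instance $R \otimes_{R^s} R$ is an indecomposable bimodule filtered by $R_1$ and $R_s$, so there are nonsplit extensions between these graded pieces. Both difficulties evaporate once you invoke Lemma \ref{SelfAdjoint}; that reduction is the missing idea.
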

\begin{proof}
Using the self-adjunction from Lemma \ref{SelfAdjoint}, it suffices to consider the case $\underline{z} = 1$.

The kernel of $\Xi_{\underline{x}} \rightarrow \Xi_{\underline{x}}|_{Y_1} \simeq \Delta_1^{\oplus n}$ admits a standard filtration with graded pieces $\Delta_w$ indexed by $w \neq 1$. Applying Soergel's functor $\bV$ gives a short exact sequence of bimodules \[0 \rightarrow \ker \rightarrow B_{\underline{x}} \rightarrow R_1^{\oplus n} \rightarrow 0\] such that the kernel is filtered with graded pieces $R_w$ indexed by $w \neq 1$, by Equation \eqref{VTilt} and Proposition \ref{VStandard}. For such $w \neq 1$ we have $\Hom^0(R_w, R_1) \simeq 0$. Therefore $\Hom^0(\ker, R_1) \simeq 0$.

There is a long exact sequence \beq \label{BSLES} 0 \rightarrow \Hom^0(R_1^{\oplus n}, R_1) \rightarrow \Hom^0(B_{\underline{x}}, R_1) \rightarrow \Hom^0(\ker, R_1) \simeq 0 \rightarrow \dots \eeq so $\Hom^0(B_{\underline{x}}, R_1) \simeq R_1^{\oplus n}$ is free as a right $R$-module.
\end{proof}

We used the following self-adjunction, a multiplicative version of Proposition 5.10 of \cite{So07}.

\begin{lemma}\label{SelfAdjoint}
The functor \beq \label{SelfAdjointMap} R \otimes_{R^s} -:\Mod(R) \rightarrow \Mod(R)\eeq is self-adjoint.
\end{lemma}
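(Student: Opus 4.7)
The plan is to identify $R^s \hookrightarrow R$ as a Frobenius extension with the Demazure operator $D$ as Frobenius form, after which self-adjointness is standard adjunction yoga.

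The main step is to check that the symmetric $R^s$-bilinear pairing $R \times R \to R^s$, $(f, g) \mapsto D(fg)$, is perfect. Using the basis $\{1, e^{\omega}\}$ for $R$ as a free $R^s$-module (from the displayed formula in the excerpt), a direct calculation gives $D(1) = 0$, $D(e^{\omega}) = 1$, and
$$D(e^{2\omega}) \;=\; \frac{e^{2\omega} - e^{2s\omega}}{e^{\omega} - e^{s\omega}} \;=\; e^{\omega} + e^{s\omega},$$
so the Gram matrix $\bigl(\begin{smallmatrix} 0 & 1 \\ 1 & e^{\omega} + e^{s\omega} \end{smallmatrix}\bigr)$ has determinant $-1 \in (R^s)^{\times}$. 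Hence $r \mapsto D(r \cdot -)$ is an isomorphism of $R$-bimodules $R \xrightarrow{\sim} \Hom_{R^s}(R, R^s)$.

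Combined with the fact that $R$ is finitely generated free over $R^s$, this yields a natural isomorphism $R \otimes_{R^s} M \cong \Hom_{R^s}(R, M)$ of $R$-modules for every $R^s$-module $M$, identifying the induction and coinduction functors $\Mod(R^s) \to \Mod(R)$. The functor in the statement factors as restriction $\Mod(R) \to \Mod(R^s)$ followed by induction; combining the induction-restriction adjunction with $\mathrm{Ind} \cong \mathrm{Coind}$ gives
$$\Hom_R(R \otimes_{R^s} M, N) \;=\; \Hom_{R^s}(M, N) \;=\; \Hom_R(M, R \otimes_{R^s} N)$$
for all $M, N \in \Mod(R)$, which proves self-adjointness. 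No step is substantive beyond the $2\times 2$ determinant calculation, whose value is a unit precisely because $G$ is adjoint, so that the fundamental coweight $\omega$ with $\langle \omega, \check{\alpha}\rangle = 1$ exists and $D$ is well defined.
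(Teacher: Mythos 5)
Your proof is correct and takes essentially the same route as the paper: both establish the $R$-module isomorphism $R \cong \Hom_{R^s}(R, R^s)$ by exhibiting a $2\times 2$ matrix over $R^s$ with unit determinant and then conclude that induction coincides with coinduction. The only cosmetic difference is that you package this as a Frobenius extension with form $D$, using the Gram matrix $\bigl(\begin{smallmatrix} 0 & 1 \\ 1 & e^{\omega}+e^{s\omega}\end{smallmatrix}\bigr)$ of determinant $-1$, whereas the paper writes down the $R$-linear map $1 \mapsto 1^*$ directly and finds determinant $-e^{\omega+s\omega}$.
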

\begin{proof}
Since $G$ has connected center, there is a coweight $\omega \in \Lambda$ satisfying $\langle \omega, \check{\alpha} \rangle = 1$. Define the Demazure operator \[D: R \rightarrow R^s, \qquad f \mapsto (f - sf)/(e^{\omega} - e^{s\omega}),\] using that $e^{\omega} - e^{s\omega}$ generates the ideal of functions vanishing on $\check{T}_{\alpha}$. Hence \[R = R^s \oplus e^{\omega} R^s, \qquad f \; \leftrightarrow \; (D(e^{s\omega}f), \; e^{\omega}D(f))\] is a free $R^s$-module of rank 2. 

Let $1^*, (e^{\omega})^* \in \Hom_{R^s}(R, R^s)$ be the dual $R^s$-basis to $1, e^{\omega} \in R$. The $R$-linear map \[R \xrightarrow{\sim} \Hom_{R^s}(R, R^s), \qquad 1 \; \mapsto \; 1^*, \quad  e^{\omega} \; \mapsto \; e^{\omega} 1^* =  (e^{\omega} + e^{s\omega})1^* - e^{\omega + s\omega} (e^{\omega})^*\] is an isomorphism because, as an $R^s$-linear map, it has unit determinant $-e^{\omega + s\omega} \in R^s$. Therefore \[R \otimes_{R^s} - \simeq \Hom_{R^s}(R, -):\Mod(R^s) \rightarrow \Mod(R)\] is both left and right adjoint to restriction, and hence \eqref{SelfAdjointMap} is self-adjoint.
\end{proof}

\subsection{Homotopy category of tilting sheaves} Let $\KTilt_{(B)}(Y)$ denote the bounded homotopy category of $\Tilt_{(B)}(Y)$. Following Proposition 1.5 of \cite{BBM}, this recovers the universal monodromic Hecke category.  

\begin{proposition}\label{TiltShv}
There is an equivalence $\DShv_{(B)}(Y) \simeq \KTilt_{(B)}(Y)$.
\end{proposition}
\begin{proof}
Equation \eqref{StandardtoCostandard} implies that $\Hom(\Xi_{\underline{x}}, \Xi_{\underline{z}})$ is concentrated in degree 0. Therefore the realization functor $\KTilt_{(B)}(Y) \rightarrow \DShv_{(B)}(Y)$ constructed in \cite{B06} is fully faithful.

For each stratum $Y_w$, choosing a reduced expression for $w$ gives a Bott--Samelson tilting sheaf $\Xi_{\underline{w}}$ whose support is the closure $\overline{Y}_w$. By induction on length, the essential image contains all $\Delta_w$. Hence using Lemma \ref{Compact} the realization functor is essentially surjective.
\end{proof}

\section{Uncompleting Soergel's Struktursatz and BGS Koszul duality}
Here we show that Soergel's functor is fully faithful on Bott--Samelson tilting sheaves, and deduce that $\DShv_{(B)}(Y) \simeq \KSBim(R)$ is the bounded homotopy category of multiplicative Soergel bimodules. This implies Eberhardt's conjecture that uncompletes Koszul duality.

The usual proof of Soergel's Struktursatz uses a certain socle and cosocle calculation, Lemma 2.1 of \cite{BBM}, so we instead argue by reducing to semisimple rank 1.

\subsection{Localizing Bott--Samelson sheaves}
Here we explain the splitting of Bott--Samelson tilting sheaves after localizing away from all walls except $\check{T}_{\beta}$. We reduce to the case of a simple coroot by choosing $w \in W$ such that $\ell(w) < \ell(wt)$ and $w^{-1} \beta = \alpha$ is simple. 

\begin{lemma}\label{BSSheafLoc}
There is a splitting of $\Xi_{\underline{x}}^{(\beta)} * \Delta_w^{(\alpha)}$ with summands of the form $\Delta_v^{(\alpha)}$ and $\Delta_v^{(\alpha)} * \Xi_s^{(\alpha)}$. Here $v \in W$ are such that $\Delta_v^{(\alpha)} \simeq \nabla_v^{(\alpha)}$ is clean.\end{lemma}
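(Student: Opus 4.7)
I would argue by induction on $r = \ell(\underline{x})$. The base case $r = 0$ gives $\Xi_{\emptyset}^{(\beta)} * \Delta_w^{(\alpha)} = \Delta_w^{(\alpha)}$, which is clean by Proposition~\ref{Key}(1): the hypothesis $\ell(w) < \ell(wt)$ together with $w\alpha = \beta$ forces $w\alpha > 0$, hence $\ell(w) < \ell(ws)$ for $s = s_\alpha$. For the inductive step, factor $\Xi_{\underline{x}} = \Xi_{\underline{x}'} * \Xi_{s_{i_r}}$ with $\underline{x}' = s_{i_1}\cdots s_{i_{r-1}}$, and split into two cases depending on whether $s_{i_r}$ equals $t = s_\beta$.

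\emph{Case 1 ($s_{i_r} \neq t$).} The walls $\check{T}_{\alpha_{i_r}}$ and $\check{T}_\beta$ are distinct, so the extension class in $R/(e^{\alpha_{i_r}}-1)$ defining $\Xi_{s_{i_r}}$ vanishes after $(\beta)$-localization; hence $\Xi_{s_{i_r}}^{(\beta)} = \Delta_{s_{i_r}}^{(\beta)} \oplus \Delta_1^{(\beta)}$. Matching the twisted left $R$-actions (cf.\ lemma 7.3 of \cite{BR}) and applying Proposition~\ref{Key}(2) to collapse $\Delta_{s_{i_r}}^{(\beta)} * \Delta_w^{(\alpha)} = \Delta_{s_{i_r}w}^{(\alpha)}$, one obtains
\[
\Xi_{\underline{x}}^{(\beta)} * \Delta_w^{(\alpha)} = \bigl(\Xi_{\underline{x}'}^{(s_{i_r}\beta)} * \Delta_{s_{i_r} w}^{(\alpha)}\bigr) \oplus \bigl(\Xi_{\underline{x}'}^{(\beta)} * \Delta_w^{(\alpha)}\bigr).
\]
Since $s_{i_r} \neq t$ forces $s_{i_r}\beta > 0$, the standard $\Delta_{s_{i_r}w}^{(\alpha)}$ is clean, and each summand is then handled by induction on $\underline{x}'$ with respective parameters $(s_{i_r}\beta, s_{i_r}w, \alpha)$ and $(\beta, w, \alpha)$.

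\emph{Case 2 ($s_{i_r} = t$).} Here $\beta$ is simple and $t = wsw^{-1}$ coincides with a simple reflection. The braid-type identity $\Xi_t * \Delta_w = \Delta_w * \Xi_s$ (both sides being indecomposable tilting with the same standards $\Delta_w, \Delta_{ws}$) gives
\[
\Xi_{\underline{x}}^{(\beta)} * \Delta_w^{(\alpha)} = \bigl(\Xi_{\underline{x}'}^{(\beta)} * \Delta_w^{(\alpha)}\bigr) * \Xi_s^{(\alpha)}.
\]
Apply the inductive hypothesis to the parenthesized factor: each summand of the form $\Delta_v^{(\alpha)}$ becomes $\Delta_v^{(\alpha)} * \Xi_s^{(\alpha)}$ after right-convolution with $\Xi_s^{(\alpha)}$, while each summand of the form $\Delta_v^{(\alpha)} * \Xi_s^{(\alpha)}$ becomes $\Delta_v^{(\alpha)} * \Xi_s^{(\alpha)} * \Xi_s^{(\alpha)} = \Delta_v^{(\alpha)} * \Xi_s^{(\alpha)} \oplus \Delta_v^{(\alpha)} * \Xi_s^{(\alpha)}$ by Lemma~\ref{Rank1Square}. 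All summands have the desired form, confirming this is where Lemma~\ref{Rank1Square} enters.

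\textbf{Main obstacle.} The delicate part is verifying that the inductive hypothesis actually applies to the first summand in Case 1: the new parameters $(s_{i_r}\beta, s_{i_r}w)$ automatically satisfy $(s_{i_r}w)^{-1}(s_{i_r}\beta) = \alpha$ simple, but the length condition $\ell(s_{i_r}w) < \ell(s_{i_r}w \cdot s_{s_{i_r}\beta})$ can fail. When it does, one must replace $s_{i_r}w$ by the shorter coset representative and absorb the resulting $\Xi_s^{(\alpha)}$-factor (analogously to the decomposition in Proposition~\ref{TiltingSplitting}) before iterating, which ensures the final decomposition still takes the stated form.
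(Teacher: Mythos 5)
Your right-to-left induction (stripping the last Bott--Samelson factor) is a genuinely different route from the paper's, which strips the \emph{first} factor: in the paper the parameters $(\beta,w)$ never change across the induction, only the labels $v$ of the summands vary, so the inductive hypothesis always applies with the original $w$. Your recursion changes both $\beta$ and $w$, and the ``Main obstacle'' you flag is real as stated: with the running example $G = \PGL_3$, $\beta = \alpha_1+\alpha_2$, $w = s_1$, $\alpha = \alpha_2$, $s_{i_r} = s_2$, one finds $w' = s_2s_1$, $\beta' = \alpha_1$, $t' = s_1$, and $\ell(w't') = \ell(s_2) = 1 < 2 = \ell(w')$, so the length condition fails. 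Your proposed repair (``replace $s_{i_r}w$ by the shorter coset representative and absorb the resulting $\Xi_s^{(\alpha)}$-factor'') is too vague to close the induction: the shorter element of $\{w', w't'\}$ no longer sends $\beta'$ to $\alpha$, so it cannot be fed back into the inductive hypothesis, and it is not clear what $\Xi_s$-factor would need to be absorbed.

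The right fix is simpler and makes the obstacle disappear. The hypothesis $\ell(w) < \ell(wt)$ is never actually used in the lemma's proof; what the argument needs is only that $\beta$ is a positive coroot and $\alpha = w^{-1}\beta$ is simple. This already gives $w\check{\alpha} = \check{\beta} > 0$, hence $\ell(w) < \ell(ws)$, which is exactly the cleanness of $\Delta_w^{(\alpha)}$ and the hypothesis of Proposition~\ref{Key}(2) that you invoke. (In particular your base-case sentence ``the hypothesis $\ell(w) < \ell(wt)$ together with $w\alpha = \beta$ forces $w\alpha > 0$'' is a non-sequitur: $w\alpha = \beta > 0$ is just the convention that $\beta$ is positive, while $\ell(w) < \ell(wt)$ is equivalent to $w\check{\beta} = w^2\check{\alpha} > 0$, a different condition.) If you state the inductive claim with only the weaker hypothesis, then in your Case 1 the new data $\beta' = s_{i_r}\beta$ (positive because $s_{i_r} \neq t$), $w' = s_{i_r}w$, and the unchanged $\alpha$ satisfy it, so the induction closes with no coset gymnastics, and your Case 2 matches the paper's $\alpha_1 = v\alpha$ case including the use of Lemma~\ref{Rank1Square}. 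With that correction your argument is a legitimate alternative to the paper's left-to-right induction, though the paper's version is cleaner precisely because it never has to track how $(\beta, w)$ transform.
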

\begin{proof}
Let $s_1$ be first simple reflection in the expression $\underline{x}$, and $\alpha_1$ be the corresponding simple coroot. Then $\Xi_{\underline{x}} \simeq \Xi_{s_1} * \Xi_{\underline{z}}$ where $\underline{x} = s_1 \underline{z}$. By induction on the length of the expression, $\Xi_{\underline{z}}^{(\beta)} * \Delta_w^{(\alpha)}$ splits with summands of the desired form. Therefore it suffices to show that $\Xi_{s_1}^{(v\beta)} * \Delta_v^{(\alpha)}$ and $\Xi_{s_1}^{(v\alpha)} * \Delta_v^{(\alpha)} * \Xi_s^{(\alpha)}$ both split with summands of the desired form.

If $\alpha_1 \neq v\alpha$ then $\Xi_{s_1}^{(v\alpha)} \simeq \Delta_1^{(v\alpha)} \oplus \Delta_{s_1}^{(v\alpha)}$ splits. Either $v < s_1v$ or $v > s_1v$ but, since $\Delta_{s_1}^{(v\alpha)} \simeq \nabla_{s_1}^{(v\alpha)}$ is clean, in both cases \[\Xi_{s_1}^{(v\alpha)} * \Delta_v^{(\alpha)} \simeq \Delta_v^{(\alpha)} \oplus \Delta_{s_1v}^{(\alpha)}\] and $\Delta_{s_1v}^{(\alpha)} \simeq \Delta_{s_1}^{(v\alpha)} * \Delta_v^{(\alpha)}$ is clean. Therefore \[\Xi_{s_1}^{(v\alpha)} * \Delta_v^{(\alpha)} * \Xi_s^{(\alpha)} \simeq (\Delta_v^{(\alpha)} * \Xi_s^{(\alpha)}) \oplus (\Delta_{s_1v}^{(\alpha)} * \Xi_s^{(\alpha)})\] splits with summands of the desired form.

If $\alpha_1 = v\alpha$ then \[\Xi_{s_1}^{(\alpha_1)} * \Delta_v^{(\alpha)} \simeq \Delta_v^{(\alpha)} * \Xi_s^{(\alpha)}.\] Indeed since $\Delta_v^{(\alpha)}$ is clean, both sides admit standard and costandard filtrations with graded pieces indexed by $v$ and $vs$. Therefore Proposition \ref{Rank1Square} implies that \[\Xi_{s_1}^{(\alpha_1)} * \Delta_v^{(\alpha)} * \Xi_s^{(\alpha)}  \simeq \Delta_v^{(\alpha)} * \Xi_s^{(\alpha)} * \Xi_s^{(\alpha)}  \simeq (\Delta_v^{(\alpha)} * \Xi_s^{(\alpha)})^{\oplus 2}\] splits with summands of the desired form.
\end{proof}

\subsection{Uncompleting Soergel's  Struktursatz}
Soergel's Struktursatz says that $\bV$ is fully faithful on Bott--Samelson tilting sheaves.

\begin{theorem} \label{Struktursatz}
Soergel's functor $\bV \simeq \Hom(\Xi, -)$  induces isomorphisms \beq \label{FullyFaithful}\Hom^0_{\DShv_{(B)}(Y)}(\Xi_{\underline{x}}, \Xi_{\underline{z}}) \rightarrow \Hom^0_{\Bim(R)}(B_{\underline{x}}, B_{\underline{z}}).\eeq
\end{theorem}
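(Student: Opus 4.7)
The plan is to reduce to the rank-one computation of Proposition \ref{SimpleEnd} using Hartogs' lemma and the splitting of Lemma \ref{BSSheafLoc}, in direct parallel with the proof of Theorem \ref{main}.

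\textbf{Reduction to a localized check.} Both sides of \eqref{FullyFaithful} are finitely generated free right $R$-modules: Proposition \ref{SBimFree} covers the bimodule side, and on the sheaf side one applies \eqref{StandardtoCostandard} to the standard and costandard filtrations of the Bott-Samelson tilting sheaves $\Xi_{\underline{x}}$ and $\Xi_{\underline{z}}$. Arguing as in the proof of Proposition \ref{LocEndProp}, Hartogs' lemma then reduces the theorem to showing that \eqref{FullyFaithful} becomes an isomorphism after right-localizing to $\check{T}^{(\beta)}$ for every coroot $\beta$. Furthermore, choosing $w \in W$ with $\ell(w) < \ell(wt)$ and $\alpha := w^{-1}\beta$ simple, Proposition \ref{Key} makes $\Delta_w^{(\beta)}$ clean; convolving both $\Xi_{\underline{x}}$ and $\Xi_{\underline{z}}$ on the right by this invertible standard transports the localization to $\check{T}^{(\alpha)}$, so one may assume $\beta = \alpha$ is simple.

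\textbf{Splitting into rank-one pieces.} I would then apply Lemma \ref{BSSheafLoc} with $w = 1$ to decompose $\Xi_{\underline{x}}^{(\alpha)}$ as a direct sum of summands of the form $\Delta_v^{(\alpha)}$ (clean standards, so $\Delta_v^{(\alpha)} = \nabla_v^{(\alpha)}$) and $\Delta_v^{(\alpha)} * \Xi_s^{(\alpha)}$, and likewise for $\Xi_{\underline{z}}^{(\alpha)}$. Applying the monoidal functor $\bV$ (Proposition \ref{Monoidal}), together with $\bV(\Xi_s) = R \otimes_{R^s} R$ from \eqref{VXis} and $\bV(\nabla_v) = R_v$ from Lemma \ref{VStandard}, yields parallel decompositions of $B_{\underline{x}}^{(\alpha)}$ and $B_{\underline{z}}^{(\alpha)}$ into summands $R_v^{(\alpha)}$ and $R_v \otimes_{R^s} R^{(\alpha)}$.

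\textbf{Matrix-element verification.} The localized map \eqref{FullyFaithful} now breaks into matrix entries indexed by pairs of summands. Proposition \ref{LocalizedExtensions} kills all entries whose source and target have indices in different $\langle s \rangle$-cosets. For each surviving entry, convolving on the left by the invertible clean standards $\Delta_{v^{-1}}^{(\alpha)}$ (which twist left $R$-actions by $v$ as in lemma 7.3 of \cite{BR}) reduces the computation to one of the four base cases $\Hom(\Delta_1^{(\alpha)}, \Delta_1^{(\alpha)})$, $\Hom(\Delta_1^{(\alpha)}, \Xi_s^{(\alpha)})$, $\Hom(\Xi_s^{(\alpha)}, \Delta_1^{(\alpha)})$, or $\Hom(\Xi_s^{(\alpha)}, \Xi_s^{(\alpha)}) = R \otimes_{R^s} R^{(\alpha)}$. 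The last is Proposition \ref{SimpleEnd}, while the other three fall out of the standard/costandard filtration of $\Xi_s$ together with \eqref{StandardtoCostandard}; each matches the corresponding bimodule calculation on the nose.

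\textbf{Main obstacle.} The hardest part will be the bookkeeping in the matrix-element step: verifying that the convolution equivalences twisting the left and right $R$-actions really identify the sheaf-side matrix entry with the bimodule-side one. All of the conceptual content is packaged into Proposition \ref{SimpleEnd}; once reduced to rank one, the remaining work is essentially a direct-sum manipulation followed by a single invocation of Hartogs' lemma.
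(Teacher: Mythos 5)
Your proposal is correct and follows essentially the same route as the paper: establish freeness of both sides via \eqref{StandardtoCostandard} and Proposition \ref{SBimFree}, reduce by Hartogs' lemma to a single-wall localization, split both Bott-Samelson tilting sheaves into rank-one pieces via Lemma \ref{BSSheafLoc}, and check the four base cases (which the paper packages as Lemma \ref{Rank1Struk}, with case (4) being your Proposition \ref{SimpleEnd} invocation). The only cosmetic difference is that you first transport $\beta$ to a simple coroot by convolving with $\Delta_w^{(\beta)}$ and then apply Lemma \ref{BSSheafLoc} with $w=1$, whereas the paper applies the lemma directly with general $w$; these are the same reduction.
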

\begin{proof}
Equation \eqref{StandardtoCostandard} and Proposition \ref{SBimFree} imply that both sides of \eqref{FullyFaithful} are free right $R$-modules. 
If $\ell(w) < \ell(wt)$ and $w^{-1} \beta = \alpha$ is a simple coroot then $\Xi_{\underline{x}}^{(\beta)} * \Delta_w^{(\alpha)}$ and $\Xi_{\underline{z}}^{(\beta)} * \Delta_w^{(\alpha)}$ split with summands of the form $\Delta_v^{(\alpha)}$ and $\Delta_v^{(\alpha)} * \Xi_s^{(\alpha)}$ by Lemma \ref{BSSheafLoc}. Propositions \ref{XiCoStandard} and \ref{SimpleEnd} imply \[\bV(\Delta_v^{(\alpha)}) \simeq R_v^{(\alpha)} \quad \text{and} \quad \bV(\Delta_v^{(\alpha)} * \Xi_s^{(\alpha)}) \simeq R_v \otimes_{R^s} R^{(\alpha)}.\] By Hartogs' lemma it suffices to prove $\bV$ is fully faithful on such summands. Indeed the only nonzero terms are obtained from Lemma \ref{Rank1Struk} by convolving both arguments by the same $\Delta_v^{(\alpha)}$.
\end{proof}

By localizing we reduced to the following calculations in semisimple rank 1.

\begin{lemma}\label{Rank1Struk}
Soergel's functor $\bV$ induces isomorphisms 
\begin{enumerate} 
\item \label{XiXi}$\Hom^0(\Xi_s, \Xi_s) \rightarrow \Hom^0(R \otimes_{R^s} R, R \otimes_{R^s} R)$,
\item \label{DeltaDelta}$\Hom^0(\Delta_1, \Delta_1) \rightarrow \Hom^0(R, R)$,
\item \label{DeltaXi} $\Hom^0(\Delta_1, \Xi_s) \rightarrow \Hom^0(R, R \otimes_{R^s} R)$,
\item \label{XiDelta}$\Hom^0(\Xi_s, \Delta_1) \rightarrow \Hom^0(R \otimes_{R^s} R, R)$.
\end{enumerate}
\end{lemma}
\begin{proof}
Both sides of \eqref{XiXi} are $R \otimes_{R^s} R$ and the identity map goes to the identity, therefore it is an isomorphism. Similarly \eqref{DeltaDelta} is an isomorphism.

By Proposition \ref{SimpleEnd}, Soergel's functor sends \[0 \rightarrow \nabla_1 \rightarrow \Xi_s \rightarrow \nabla_s \rightarrow 0 \quad \text{ to } \quad 0 \rightarrow R_1 \rightarrow R \otimes_{R^s} R \rightarrow R_s \rightarrow 0.\] 
Since $\Hom(\Delta_1, \nabla_s) \simeq 0$ and $\Hom^0(R_1, R_s) \simeq 0$, the vertical maps are isomorphisms in
\[\begin{tikzcd}[cramped]
\Hom(\Delta_1, \Xi_s) \arrow[r, "\eqref{DeltaXi}"] & \Hom(R, R \otimes_{R^s} R)  \\
\Hom(\Delta_1, \nabla_1) \arrow[r, "\sim"'] \arrow[u, "\sim"] & \Hom(R_1, R_1) \arrow[u, "\sim"'].
\end{tikzcd}\]
Therefore \eqref{DeltaXi} is an isomorphism. Similarly \eqref{XiDelta} is an isomorphism.
\end{proof}

\subsection{Proof of Theorem \ref{Koszul}}
Theorem \ref{Struktursatz} gives an equivalence of additive categories $\Tilt_{(B)}(Y) \simeq \SBim(R)$ between tilting sheaves and multiplicative Soergel bimodules. Taking bounded homotopy categories implies universal ungraded Koszul duality \[\DShv_{(B)}(Y) \simeq \KTilt_{(B)}(Y) \simeq \KSBim(R) \simeq \DK_{\check{B}}(\check{X}),\] by \cite{Eb} and Proposition \ref{TiltShv}.

\subsection{Remark on quantum parameters}
In quantum K-theoretic geometric Satake \cite{Elias, CK}, the quantum parameter arises from loop rotation equivariance. In quantum geometric Langlands \cite{G}, the quantum parameter arises from monodromy about the central extension line bundle. The extension of universal Koszul duality to Kac--Moody groups in \cite{EE} exchanges loop rotation equivariance for central extension monodromy.

\appendix

\section{Groups with disconnected center}\label{ApendixEnd}
If $G$ has disconnected center then Lemma \ref{AdjointWalls} may fail, so Theorem \ref{main} must be modified as follows.
Choose a finite central subgroup $Z \subset G$ such that $G^{\ad} \coloneqq G/Z$ has connected center. There exists a reductive group $\check{G}^{\sct}$ with simply connected derived subgroup, and a finite central subgroup $\check{Z} \subset \check{G}^{\sct}$ Cartier dual to $Z$, such that $\check{G}^{\sct}/\check{Z} = \check{G}$.

Let $R'$ be the group ring of the coweight lattice of $T/Z$. The left and right torus action factors through the antidiagonal quotient $T \times T \rightarrow (T \times T)/Z \curvearrowright Y$. Therefore the $R \otimes R$-linear structure on $\DShv_{(B)}(Y)$ extends to an $(R' \otimes R')^{\check{Z}}$-linear structure.


We assumed $G$ had connected center to simplify notation. In general the same arguments show \[(R' \otimes_{(R')^{W}} R')^{\check{Z}} \simeq \Hom(\Xi, \Xi).\]

\subsection{Example}
Let $G = \SL(2)$ and $\alpha$ be the simple coroot. Then $R\otimes_{R^W} R$ equals functions on $\Gamma_1 \cup \Gamma_s$, the union of the graphs of the Weyl group elements. The graphs meet at two points, \[\big(\begin{smallmatrix} 1 & \\ & 1 \end{smallmatrix}\big) \quad \text{and} \quad \big(\begin{smallmatrix} -1 & \\ & 1 \end{smallmatrix}\big).\] Replacing $R\otimes_{R^W}R$ by $(R'\otimes_{(R')^{W}}R')^{\pi_1(\check{G})}$ has the effect of separating the graphs so they only intersect at the identity.

The tilting sheaf admits a costandard filtration \[0 \rightarrow \nabla_1 \rightarrow \Xi \rightarrow \nabla_s \rightarrow 0 \quad \text{classified by} \quad 1 \in \Ext^1(\nabla_s, \nabla_1) \simeq k_1,\]  the augmentation module at $1 \in \check{T}$. After localizing $\Xi[(e^{\alpha} - 1)^{-1}]$ splits with endomorphisms \[\Hom(\Xi, \Xi)[(e^{\alpha} - 1)^{-1}] \simeq (R_1 \oplus R_s)[(e^{\alpha} - 1)^{-1}],\] so there is no second intersection.

\end{document}